\newcommand{\de}{\partial}
\newcommand{\ve}{\varepsilon}
\newcommand{\Ric}{\mathrm{Ric}}
\newcommand{\ov}[1]{\overline{#1}}
\newcommand{\ti}[1]{\tilde{#1}}
\newcommand{\tr}[2]{\textrm{tr}_{#1} #2}
\begin{document}
\newcounter{remark}
\newcounter{theor}
\setcounter{remark}{0}
\setcounter{theor}{1}
\newtheorem{claim}{Claim}
\newtheorem{theorem}{Theorem}[section]
\newtheorem{proposition}{Proposition}[section]
\newtheorem{conjecture}{Conjecture}[section]
\newtheorem{question}{Question}[section]
\newtheorem{lemma}{Lemma}[section]
\newtheorem{defn}{Definition}[theor]
\newtheorem{corollary}{Corollary}[section]
\newenvironment{proof}[1][Proof]{\begin{trivlist}
\item[\hskip \labelsep {\itshape #1}]}{\hfill$\square$\medskip\end{trivlist}}
\newenvironment{remark}[1][Remark]{\addtocounter{remark}{1} \begin{trivlist}
\item[\hskip
\labelsep {\bfseries #1  \thesection.\theremark}]}{\end{trivlist}}
\setlength{\arraycolsep}{2pt}
\centerline{\bf THE CALABI-YAU EQUATION, SYMPLECTIC FORMS}
\centerline{\bf AND ALMOST COMPLEX STRUCTURES\footnote{The first author is supported in part by a Harvard Merit Fellowship. The second author is supported in part  by National Science Foundation grant  DMS-08-48193 and a Sloan Fellowship.}
}
%\addtocounter{section}{1}
\bigskip

$$\begin{array}{rlrl}
& \textbf{Valentino Tosatti} \qquad & & \textbf{Ben Weinkove} \\
 & \textrm{Mathematics Department} \qquad & & \textrm{Mathematics Department} \\
& \textrm{Harvard University} & & \textrm{University of California, San Diego} \\
& \textrm{Cambridge, MA 02138} & & \textrm{La Jolla, CA 92093}
\end{array}$$

\bigskip
\bigskip

\centerline{\em Dedicated to Professor S.-T. Yau on the occasion of his 60th birthday.}

\bigskip
\bigskip

\noindent
{\bf Abstract.}  We discuss a conjecture of Donaldson on a version of Yau's Theorem for symplectic forms with compatible almost complex structures and survey some recent progress on this problem.  We also speculate on some future possible  directions, and use a monotonicity formula for harmonic maps to obtain a new local estimate in the setting of Donaldson's conjecture.

\section{Background - Yau's Theorem}

In this section we give some background on Yau's Theorem \cite{Ya} in K\"ahler geometry, formerly known as the Calabi Conjecture.  It can be stated as follows.

\begin{theorem}\label{yau1}
Let $(M, \omega)$ be a compact K\"ahler manifold of complex dimension $n$.  If $\sigma$ is a volume form on $M$ satisfying $\int_M \sigma = \int_M \omega^n$ then there exists a unique K\"ahler form $\tilde{\omega}$  in $[\omega]$ satisfying
\begin{equation} \label{CalabiYau}
\tilde{\omega}^n = \sigma.
\end{equation}
\end{theorem}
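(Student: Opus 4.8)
The plan is to reduce the statement to the solvability of a complex Monge--Amp\`ere equation and then to solve that equation by Yau's continuity method. First I would invoke the $\de\dbar$-lemma: since $\ti{\omega}$ must lie in the fixed class $[\omega]$, any candidate can be written as $\ti{\omega} = \omega + \sqrt{-1}\,\de\dbar\varphi$ for a smooth real function $\varphi$, unique up to an additive constant. Writing $\sigma = e^{F}\omega^n$ for a smooth function $F$, equation \eqref{CalabiYau} becomes
\begin{equation*}
(\omega + \sqrt{-1}\,\de\dbar\varphi)^n = e^{F}\omega^n,
\end{equation*}
to be solved subject to the positivity requirement that $\ti{\omega}$ be a K\"ahler form. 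In local holomorphic coordinates this is the fully nonlinear elliptic equation $\det(g_{i\ov{j}} + \varphi_{i\ov{j}}) = e^{F}\det(g_{i\ov{j}})$. The hypothesis $\int_M \sigma = \int_M \omega^n$ is precisely the compatibility condition $\int_M e^{F}\omega^n = \int_M \omega^n$ forced by the fact that $\int_M \ti{\omega}^n = \int_M \omega^n$ for every $\ti{\omega} \in [\omega]$.

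Uniqueness is the easy direction: if $\varphi_1$ and $\varphi_2$ both solve, then subtracting the equations and factoring $\ti{\omega}_1^n - \ti{\omega}_2^n$ produces a linear second-order equation, with positive-definite leading coefficient and no zeroth-order term, satisfied by $\varphi_1 - \varphi_2$; the maximum principle forces this difference to be constant, so the two K\"ahler forms coincide. For existence I would run the continuity method on the family
\begin{equation*}
(\omega + \sqrt{-1}\,\de\dbar\varphi_t)^n = e^{t F + c_t}\,\omega^n, \qquad t \in [0,1],
\end{equation*}
where $c_t$ is the constant normalizing the right-hand side to total integral $\int_M \omega^n$, starting from $\varphi_0 = 0$. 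Let $S \subseteq [0,1]$ be the set of $t$ admitting a smooth K\"ahler solution; then $0 \in S$. Openness of $S$ follows from the implicit function theorem in the H\"older spaces $C^{2,\al}$ of functions normalized by $\int_M \varphi\,\omega^n = 0$, since the linearization of the operator at a solution is the Laplacian $\Delta_{\ti{\omega}_t}$, an isomorphism onto the mean-zero functions.

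The hard part will be closedness, which needs a priori estimates on $\varphi_t$ uniform in $t$, established in the classical order. A $C^0$ estimate comes from Moser iteration applied to powers of $\varphi_t$, with constants controlled purely in terms of $F$. A $C^2$ estimate---a bound on $n + \Delta\varphi_t$, the trace of the new metric with respect to $\omega$---comes from applying the maximum principle to a quantity such as $e^{-C\varphi_t}(n + \Delta\varphi_t)$, balancing the good third-order term obtained by differentiating the equation against the curvature of $\omega$. This bound makes the equation uniformly elliptic, after which Calabi's third-order estimate, or alternatively the Evans--Krylov theorem, upgrades the solution to $C^{2,\al}$; Schauder estimates and bootstrapping then give uniform $C^{k,\al}$ bounds for all $k$, and Arzel\`a--Ascoli shows $S$ is closed. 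Hence $S = [0,1]$ and $\varphi_1$ solves \eqref{CalabiYau}. Within this scheme the $C^0$ and $C^2$ estimates are the genuine obstacles, and both rely essentially on the K\"ahler positivity of $\omega$.
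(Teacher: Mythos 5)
Your proposal is correct and follows essentially the same route as the paper: reduction via the $\partial\overline{\partial}$-lemma to the complex Monge--Amp\`ere equation, Calabi-style uniqueness by the maximum principle, and Yau's continuity method with openness by the implicit function theorem and closedness by the a priori estimates ($C^0$ via Moser iteration, the second-order bound via the maximum principle applied to a quantity like $e^{-C\varphi}(n+\Delta\varphi)$, then Calabi's third-order estimate or Evans--Krylov, and bootstrapping), which is exactly the scheme recalled in Section 1 and elaborated in Steps 1--4 of Section 4 of the paper.
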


The uniqueness part of the theorem was proved earlier by Calabi \cite{Ca1}.  We will call (\ref{CalabiYau}) the Calabi-Yau equation.  

Yau's Theorem shows that the space of K\"ahler forms in a fixed K\"ahler class $\beta$ can be identified with the space of volume forms on $M$ with integral $\beta^n$ via the map $\omega \mapsto \omega^n$.   Yau's Theorem can also be stated in terms of the first Chern class of the manifold.

\begin{theorem} \label{yau2}
Let $(M, \omega)$ be a compact K\"ahler manifold of complex dimension $n$.  If $\Psi$ is a closed real $(1,1)$-form representing the cohomology class $c_1(M)$ then there exists a unique K\"ahler metric $\tilde{\omega} \in [\omega]$ satisfying
\begin{equation} \label{CalabiYau2}
 \frac{1}{2\pi} \emph{Ric}(\tilde{\omega}) = \Psi.
 \end{equation}
\end{theorem}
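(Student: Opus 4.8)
The plan is to deduce Theorem \ref{yau2} directly from Theorem \ref{yau1} by re-expressing the prescribed-Ricci condition (\ref{CalabiYau2}) as a prescribed-volume condition of the form (\ref{CalabiYau}). The mechanism is the transformation law for the Ricci form: if $\ti\omega$ is any K\"ahler form with $\ti\omega^n = e^{\varphi}\omega^n$ for a globally defined function $\varphi$, then since $\log(\ti\omega^n/\omega^n)$ differs from $\log\det\ti g - \log\det g$ only by a constant,
\begin{equation}
\Ric(\ti\omega) = \Ric(\omega) - i\de\dbar\log\frac{\ti\omega^n}{\omega^n}.
\end{equation}
Thus controlling $\Ric(\ti\omega)$ amounts to controlling the volume form $\ti\omega^n$ up to an additive $i\de\dbar$-exact term, which is exactly the freedom one has in (\ref{CalabiYau2}).

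First I would use that both $\tfrac{1}{2\pi}\Ric(\omega)$ and $\Psi$ represent $c_1(M)$, so that $\Ric(\omega) - 2\pi\Psi$ is a $d$-exact real $(1,1)$-form. The $\de\dbar$-lemma on the compact K\"ahler manifold $M$ then furnishes a real-valued $f\in C^{\infty}(M)$ with
\begin{equation}
\Ric(\omega) - 2\pi\Psi = i\de\dbar f.
\end{equation}
Next I would set
\begin{equation}
\sigma = C e^{f}\omega^n, \qquad C = \frac{\int_M\omega^n}{\int_M e^{f}\omega^n} > 0,
\end{equation}
a smooth positive volume form normalized so that $\int_M\sigma = \int_M\omega^n$. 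Theorem \ref{yau1} then applies and produces a unique K\"ahler form $\ti\omega\in[\omega]$ with $\ti\omega^n = \sigma$. Finally, since $\log(\ti\omega^n/\omega^n) = \log C + f$ and the constant is annihilated by $i\de\dbar$, the transformation law gives $\Ric(\ti\omega) = \Ric(\omega) - i\de\dbar f = 2\pi\Psi$, which is precisely (\ref{CalabiYau2}). Uniqueness is inherited from Theorem \ref{yau1}: any two solutions in $[\omega]$ have equal Ricci forms, so $\log(\ti\omega_1^n/\ti\omega_2^n)$ is pluriharmonic hence constant, and the common normalization $\int_M\ti\omega_1^n = \int_M\ti\omega_2^n = \int_M\omega^n$ forces $\ti\omega_1^n = \ti\omega_2^n$, whence $\ti\omega_1 = \ti\omega_2$.

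All of the genuine analytic difficulty---the solution of the complex Monge--Amp\`ere equation---is absorbed into Theorem \ref{yau1}, so the work here is essentially bookkeeping. I expect the only delicate points to be the two global reductions: applying the $\de\dbar$-lemma (which relies essentially on compactness and the K\"ahler hypothesis) to produce $f$, and pinning down the multiplicative constant $C$ so that the target volume form $\sigma$ lands in the admissible class $\int_M\sigma = \int_M\omega^n$ required by Theorem \ref{yau1}. Keeping the sign conventions in the Ricci transformation law consistent is the one place where a silent error could propagate.
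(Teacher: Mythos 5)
Your proposal is correct and follows essentially the same route as the paper: both use the $\de\dbar$-lemma to write $\Ric(\omega)-2\pi\Psi$ as $\sqrt{-1}\,\de\dbar$ of a smooth function, reduce (\ref{CalabiYau2}) to the Calabi-Yau equation $\ti{\omega}^n=\sigma$ with $\sigma$ a normalized multiple of $e^f\omega^n$, and settle uniqueness via the constancy of pluriharmonic functions on a compact manifold together with the uniqueness in Theorem \ref{yau1}. The only cosmetic difference is that you normalize with a multiplicative constant $C$ while the paper absorbs it additively into $F$.
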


It is not difficult to see that Theorems \ref{yau1} and \ref{yau2} are equivalent.  Indeed, assuming Theorem \ref{yau1} we proceed as follows.   The first Chern class $c_1(M)$ is represented by $\frac{1}{2\pi} \textrm{Ric}(\omega)$ and hence the $\partial \ov{\partial}$-Lemma produces a smooth function $F$ on $M$, which we may assume satisfies $\int_M e^F \omega^n = \int_M \omega^n$, 
with
\begin{equation} \label{Psi}
\Psi = \frac{1}{2\pi} \textrm{Ric}(\omega)  - \frac{\sqrt{-1}}{2\pi} \partial \ov{\partial} F.
\end{equation}
By the definition of the Ricci curvature, (\ref{CalabiYau2}) is then equivalent to
\begin{equation} \label{CalabiYau3}
\frac{\sqrt{-1}}{2\pi} \partial\ov{\partial} \log \frac{\tilde{\omega}^n}{e^F \omega^n} =0.
\end{equation}
Since every pluriharmonic function on $M$ is constant, one immediately sees that solving  (\ref{CalabiYau3}) is equivalent to finding a K\"ahler form $\tilde{\omega}$ in $[\omega]$ satisfying
$$\tilde{\omega}^n =  \sigma,$$
namely, equation (\ref{CalabiYau}), for $\sigma = e^F \omega^n$.  Conversely, given $\sigma =: e^F \omega^n$ as in Theorem \ref{yau1}, one can define $\Psi \in c_1(M)$ by (\ref{Psi}) and see in the same way that $\tilde{\omega}$ solving (\ref{CalabiYau2}) satisfies $\tilde{\omega}^n = \sigma$. 

An immediate corollary of Theorem \ref{yau2} is the following widely-used result, which is also sometimes referred to as Yau's Theorem.

\begin{corollary}
If a compact K\"ahler manifold $(M, \omega)$ satisfies $c_1(M)=0$ then there exists a unique K\"ahler form $\tilde{\omega} \in [\omega]$ with $\emph{Ric}(\tilde{\omega})=0$.
\end{corollary}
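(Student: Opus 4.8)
The plan is to obtain this statement as a direct specialization of Theorem~\ref{yau2}: the hypothesis $c_1(M)=0$ is precisely what allows one to take the Ricci representative $\Psi$ in that theorem to be identically zero. All of the genuine analytic content — the existence and uniqueness of a solution to the Calabi-Yau equation — has already been established in Theorem~\ref{yau2} (equivalently Theorem~\ref{yau1}), so no new PDE estimates are required here.

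Concretely, I would first observe that the zero form is a closed real $(1,1)$-form, and that under the hypothesis $c_1(M)=0$ it represents the cohomology class $c_1(M)$. It is therefore an admissible choice of $\Psi$ in Theorem~\ref{yau2}. Applying that theorem with $\Psi=0$ then produces a unique K\"ahler metric $\tilde{\omega}\in[\omega]$ satisfying $\frac{1}{2\pi}\Ric(\tilde{\omega})=0$, that is, $\Ric(\tilde{\omega})=0$. Both the existence and the uniqueness of $\tilde{\omega}$ are inherited verbatim from the conclusion of Theorem~\ref{yau2}.

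There is no genuine obstacle to surmount, since the deep part of the argument resides entirely in Theorem~\ref{yau2}. The only point meriting even a moment's attention is the verification that the zero form does represent $c_1(M)$ once this class vanishes, which is immediate from the definition of a de Rham representative; after this the desired conclusion is simply read off from the output of the theorem.
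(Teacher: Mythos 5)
Your proposal is correct and coincides with the paper's intended argument: the authors present this corollary as immediate from Theorem~\ref{yau2}, i.e.\ by taking $\Psi=0$, which is a closed real $(1,1)$-form representing $c_1(M)=0$, and reading off existence and uniqueness directly. Nothing further is needed.
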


This result produces Ricci flat metrics on a large class of algebraic varieties, and this has had an enormous impact on algebraic geometry and string theory. For example they were used by Todorov \cite{Td} and Siu \cite{Si} to prove two long-standing conjectures about K3 surfaces.

We recall now the proof of Theorem \ref{yau1}.  Yau used a continuity method as follows.  Write $\sigma = e^F \omega^n$ and consider the 1-parameter family of equations
$$
(*)_t \qquad \qquad \qquad \tilde{\omega}_t^n = e^{tF+c_t} \omega^n, \quad t \in [0,1],$$
for constants $c_t$ defined by $e^{-c_t} = \int_M e^{tF} \omega^n/\int_M\omega^n$.  Clearly $\tilde{\omega}_0=\omega$ solves $(*)_t$ for $t=0$.  To solve $(*)_t$ for $t \in [0,1]$, Yau proved $C^{\infty}$ estimates on $\tilde{\omega}_t$ depending on the fixed data $M$, $\omega$, $F$.  Combining these estimates with an implicit function theorem argument shows that the set
$$\{ t \in [0,1] \ | \ (*)_t \textrm{ admits a smooth solution}\}$$
is open and closed in $[0,1]$ and hence equal to $[0,1]$.  The K\"ahler form $\tilde{\omega}=\tilde{\omega}_1$ then solves (\ref{CalabiYau}).

The $C^{\infty}$ estimates of Yau can be stated as: 

\pagebreak[3]
\begin{theorem}  \label{yauestimates}
Let $(M, \omega)$ be a compact K\"ahler manifold of complex dimension $n$.  If a K\"ahler form $\tilde{\omega} \in [\omega]$ solves the Calabi-Yau equation
$$\tilde{\omega}^n = \sigma,$$
for some volume form $\sigma$ on $M$ then there are $C^{\infty}$ a priori bounds on $\tilde{\omega}$ depending only on $\omega$, $M$ and $\sigma$.  

More precisely, we have the following.  For each $k=0,1,2, \ldots$, there exists a constant $A_k$ depending only on $M$, $\omega$, $\sigma$ (with smooth dependence on $\sigma$ and $\omega$) such that
$$\| \tilde{\omega} \|_{C^k(g)} \le A_k,$$
where $g$ is the K\"ahler metric associated to $\omega$.
\end{theorem}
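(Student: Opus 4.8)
The plan is to reduce the statement to a priori estimates for the complex Monge-Amp\`ere equation and then to establish these estimates order by order. Since $\ti{\omega}$ and $\omega$ lie in the same cohomology class, the $\partial\ov{\partial}$-Lemma lets me write $\ti{\omega} = \omega + \sqrt{-1}\,\partial\ov{\partial}\varphi$ for a smooth real function $\varphi$, unique once I normalize by $\int_M \varphi\,\omega^n = 0$. Writing $\sigma = e^F \omega^n$ with $F$ determined by $\sigma$ and $\omega$, the Calabi-Yau equation becomes
$$(\omega + \sqrt{-1}\,\partial\ov{\partial}\varphi)^n = e^F \omega^n, \qquad \omega + \sqrt{-1}\,\partial\ov{\partial}\varphi > 0.$$
Because bounding $\ti{\omega}$ in $C^k(g)$ is equivalent to bounding $\varphi$ in $C^{k+2}(g)$, it suffices to produce a priori $C^k$ bounds on $\varphi$ for every $k$, with constants depending only on $M$, $\omega$ and (smoothly on) $F$. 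I would carry this out in the classical sequence: a zeroth order bound, a second order bound, a $C^{2,\alpha}$ bound, and finally a bootstrap to all orders.

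For the $C^0$ estimate I would use Yau's Moser iteration. Subtracting $\omega^n$ from both sides and factoring $\ti{\omega}^n-\omega^n = \sqrt{-1}\,\partial\ov{\partial}\varphi\wedge\sum_{k=0}^{n-1}\ti{\omega}^k\wedge\omega^{n-1-k}$, one tests the equation against powers $|\varphi|^{p}\varphi$ and integrates by parts; using the positivity of $\ti{\omega}$ this produces an integral estimate controlling the Dirichlet energy of $|\varphi|^{(p+2)/2}$ by a constant times $(p+1)\,\||\varphi|^{p+1}\|_{L^1}$. Feeding this into the Sobolev inequality on $(M,\omega)$ and iterating over $p$ yields a bound on $\|\varphi\|_{L^\infty}$ in terms of a fixed $L^{p_0}$ norm, which in turn is controlled using the normalization and $\sup_M|F|$. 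The constants here depend only on the Sobolev and Poincar\'e constants of $(M,\omega)$ and on $\sup_M |F|$.

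The heart of the argument, and the step I expect to be the main obstacle, is the second order estimate, which bounds the trace $\tr{\omega}{\ti{\omega}} = n + \Delta\varphi$. Following Aubin and Yau, I would compute the Laplacian of $\ti{g}$ applied to $\log \tr{\omega}{\ti{\omega}}$. A careful calculation, using the equation to handle the term coming from $\Delta_\omega F$ and using the K\"ahler identities, gives a differential inequality of the schematic form
$$\ti{\Delta} \log \tr{\omega}{\ti{\omega}} \geq \frac{\Delta_\omega F}{\tr{\omega}{\ti{\omega}}} - C_0 \, \tr{\ti{\omega}}{\omega},$$
where $-C_0$ is a lower bound for the holomorphic bisectional curvature of $\omega$; keeping track of these curvature terms with the correct sign is the delicate part. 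Applying the maximum principle to $\log \tr{\omega}{\ti{\omega}} - A\varphi$ for a suitably large constant $A > C_0$, and noting that $\ti{\Delta}(-A\varphi) = A\,\tr{\ti{\omega}}{\omega} - An$, I would conclude $\tr{\ti{\omega}}{\omega} \leq C$ at the maximum. The arithmetic-geometric mean inequality together with $\det\ti{g} = e^F \det g$ then converts this into an upper bound for $\tr{\omega}{\ti{\omega}}$ at that point, whence the $C^0$ bound on $\varphi$ gives $\tr{\omega}{\ti{\omega}} \leq C$ everywhere. Since the equation fixes $\det \ti{g}$, this upper bound also forces a lower bound, giving the uniform ellipticity $C^{-1}\omega \leq \ti{\omega} \leq C\omega$.

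With uniform ellipticity in hand, the equation $\log\det(g_{i\ov{j}} + \varphi_{i\ov{j}}) = F + \log\det g$ is a uniformly elliptic, concave fully nonlinear equation, so the Evans-Krylov theorem yields a uniform interior $C^{2,\alpha}$ bound on $\varphi$ (alternatively one can use Calabi's maximum-principle estimate on the third-order quantity $|\nabla \ti{g}|^2_{\ti{g}}$). At this point the coefficients of the linearized operator are bounded in $C^{\alpha}$; differentiating the equation and applying Schauder estimates, then bootstrapping, produces bounds on $\|\varphi\|_{C^k(g)}$ for every $k$, with smooth dependence on the data. Translating back through $\ti{\omega} = \omega + \sqrt{-1}\,\partial\ov{\partial}\varphi$ gives the desired bounds $\|\ti{\omega}\|_{C^k(g)} \le A_k$.
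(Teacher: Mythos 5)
Your proposal is correct and follows essentially the same route as the paper's own outline in Section 4: reduction via the $\partial\ov{\partial}$-Lemma to the complex Monge-Amp\`ere equation for the potential $\varphi$, Yau's Moser iteration for the $C^0$ bound (Step 2), the maximum principle applied to $\log \tr{\omega}{\ti{\omega}} - A\varphi$ using the key differential inequality for the second-order estimate (Step 1), and then third-order/H\"older regularity followed by a Schauder bootstrap (Steps 3 and 4). The only cosmetic differences are the ordering of the $C^0$ and trace estimates and your use of Evans-Krylov with Calabi's third-order estimate as the alternative rather than vice versa, both variants the paper itself discusses.
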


Of course, we would take $\sigma = e^{tF+c_t}\omega^n$ in order to apply this theorem to the argument above.

Donaldson \cite{D} noted that the assertion of Theorem \ref{yau1} makes sense even if the complex structure is not integrable.  In that case, one can take $\omega$ to be a symplectic form compatible with an almost complex structure $J$ and seek a symplectic form $\tilde{\omega}$, cohomologous to $\omega$, satisfying the Calabi-Yau equation
$$\tilde{\omega}^n = \sigma,$$
for some given volume form.   It turns out that the equation $(\omega +da)^n = \sigma$ for a 1-form $a$ satisfying $d^*a=0$ is overdetermined for $n>2$ and so we restrict to the case $n=2$.  We also remark that we do not expect the analogue of Theorem \ref{yau2} to hold, due to the problem of finding a function $F$ solving (\ref{Psi}), see also Conjecture \ref{conjecture4} below.

Donaldson \cite{D} conjectured that in dimension 4, one could obtain $C^{\infty}$ bounds for solutions to the Calabi-Yau equation $\tilde{\omega}^2=\sigma$.  And, at least in the case when $b^+(M)=1$, he conjectured that the analogue of Theorem \ref{yau1} would hold. In \cite{We2} it was shown that the estimates all reduce to a $C^0$ bound on an `almost-K\"ahler potential' $\varphi$, and moreover, that $\| \varphi \|_{C^0}$ can be bounded (and hence the equation solved) in the case when the Nijenhuis tensor of the almost complex structure $J$ is suitably small.  

In fact, Donaldson described in \cite{D} a more general framework which includes a conjectural almost complex version of Yau's Theorem as a special case, with applications to symplectic forms and almost complex structures.    Further analytic results in the setting where the background symplectic form is only \emph{taming} the almost complex structure were given in \cite{TWY}, improving those of \cite{We2}.  We postpone the discussion of these estimates until Section \ref{estimates} below.

The outline of this survey paper is as follows.  In Section \ref{donconj}, we discuss Donaldson's conjecture and some applications to symplectic and almost complex geometry.  In Section \ref{estimates}, we discuss the estimates of \cite{TWY} and \cite{We2}.  In Section \ref{methods} we give a rough sketch of the main steps in the proof of Yau's estimates and explain how some are generalized in \cite{We2}, \cite{TWY}. In Section \ref{mono}, we describe how a monotonicity formula for harmonic maps can be applied to give a local estimate in the setting of Donaldson's conjecture.

\setcounter{equation}{0}
\section{Donaldson's conjecture and applications} \label{donconj}

In this section we discuss the conjecture of Donaldson on estimates for the Calabi-Yau equation and describe some consequences.  We begin by recalling some terminology.  A symplectic form $\omega$ on a manifold $M$ tames an almost complex structure $J$ if
$\omega(X,JX)>0$ for all nonzero tangent vectors $X$.  The symplectic form $\omega$ is compatible with $J$ if, in addition, 
$$\omega(JX, JY) = \omega(X,Y), \quad \textrm{for all } X, Y.$$
In either case, the data $(\omega, J)$ determines a Riemannian metric $g_{\omega}$ given by
$$g_{\omega} (X,Y) = \frac{1}{2} ( \omega(X,JY) + \omega(Y, JX)),$$
satisfying the almost-Hermitian condition $g_{\omega} (JX, JY) = g_{\omega} (X,Y)$.

In \cite{D}, Donaldson made the following conjecture on $C^{\infty}$ estimates of solutions of the Calabi-Yau equation in terms of a reference taming symplectic form.    His conjecture is restricted to the case of four real dimensions, for reasons that will be made clear later.

\begin{conjecture} \label{conjecture1}
Let $(M, \Omega)$ be a compact symplectic four-manifold equipped with an almost complex structure $J$ tamed by $\Omega$.  Let $\sigma$ be a smooth volume form on $M$.  If $\tilde{\omega} \in [\Omega]$ is a symplectic form on $M$ which is compatible with $J$ and solves the Calabi-Yau equation
\begin{equation} \label{CY1}
\tilde{\omega}^2 = \sigma,
\end{equation}
then there are $C^{\infty}$ a priori bounds on $\tilde{\omega}$ depending only on $\Omega$, $J$ and $\sigma$.  

More precisely, we have the following.  For each $k =0, 1, 2, \ldots$, there exists a constant $A_k$ depending smoothly on the data $\Omega$, $J$ and $\sigma$ such that 
\begin{equation} \label{Ck}
\| \tilde{\omega} \|_{C^k(g_{\Omega})} \le A_k.
\end{equation}
\end{conjecture}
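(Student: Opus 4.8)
The plan is to follow the structure of Yau's proof of Theorem \ref{yauestimates}, replacing the complex Monge-Amp\`ere machinery by its almost-K\"ahler counterpart and carefully tracking the error terms produced by the non-integrability of $J$. The first step is to reduce the system to a single scalar problem. Since $\tilde{\omega}$ and $\Omega$ are cohomologous and $\tilde{\omega}$ is compatible with $J$, in real dimension four one can parametrize $\tilde{\omega}$ by one function, the almost-K\"ahler potential $\varphi$, normalized by (say) $\int_M \varphi \, \sigma = 0$; the Calabi-Yau equation (\ref{CY1}) then becomes a fully nonlinear second-order scalar PDE for $\varphi$ whose ellipticity and concavity follow from the taming and compatibility conditions. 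The target (\ref{Ck}) is thereby equivalent to a priori bounds on $\varphi$ in every $C^k(g_\Omega)$.

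The second step is the hierarchy of higher-order estimates, which I would import essentially from \cite{We2} and \cite{TWY}: a zeroth-order bound $\|\varphi\|_{C^0}\le C$ is to imply all the bounds in (\ref{Ck}). Concretely, one first controls $\mathrm{tr}_{g_\Omega} g_{\tilde\omega}$ (equivalently a $C^2$ bound on $\varphi$) by applying the maximum principle to a quantity of the form $e^{-A\varphi}\, \mathrm{tr}_{g_\Omega} g_{\tilde\omega}$; the non-integrability contributes extra terms built from the Nijenhuis tensor $N$ and its first derivatives, but these are of lower order than the good negative terms coming from the concavity of the equation and are to be absorbed once $\|\varphi\|_{C^0}$ and the taming constant are fixed. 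The $C^3$, and hence by bootstrapping all higher, estimates then follow from a Calabi-type third-order computation or an Evans-Krylov argument adapted to the almost-complex setting, again at the cost of commutator terms involving $N$ controlled by the previous estimates. This reduces Conjecture \ref{conjecture1} to the single assertion that $\|\varphi\|_{C^0}$ is bounded in terms of $\Omega$, $J$ and $\sigma$.

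The hard part, and the genuine obstruction, will be precisely this $C^0$ estimate. In Yau's integrable setting the $C^0$ bound is obtained by a Moser iteration driven by integration by parts against powers of $\varphi$, using crucially that $\sqrt{-1}\partial\ov{\partial}\varphi$ is the full deviation $\tilde{\omega}-\omega$ and that the $\partial\ov{\partial}$-Lemma is available. When $J$ is not integrable both features fail: the deviation $\tilde{\omega}-\Omega=da$ is no longer $\sqrt{-1}\partial\ov{\partial}\varphi$ up to lower order, and each integration by parts generates terms in which $N$ is paired with $\grad \varphi$. I would attempt to run the Moser iteration anyway on the almost-K\"ahler Laplacian of $\varphi$, organizing the Nijenhuis error terms so that they are either controlled by the energy $\int_M |d\varphi|^2$ through a Poincar\'e-type inequality, or absorbed using the integrated comparability of $\tilde{\omega}$ and $g_\Omega$ supplied by the taming bound of \cite{TWY}. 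The result of \cite{We2}, that $\|\varphi\|_{C^0}$ is controlled once $N$ is sufficiently small, strongly suggests this is feasible and localizes the entire difficulty in making the iteration uniform as the size of $N$ grows: the central challenge is to prevent the Nijenhuis error terms from overwhelming the good terms with no smallness hypothesis on $N$. A pluripotential or capacity approach in the spirit of Ko\l odziej is the natural alternative, but it appears to rely so heavily on integrability, through the local existence of plurisubharmonic potentials, that adapting it here would itself require substantial new ideas.
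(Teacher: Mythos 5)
You should first note what the target actually is: Conjecture \ref{conjecture1} is Donaldson's \emph{open} conjecture, and the paper contains no proof of it --- it only surveys partial progress. Your proposal correctly reassembles the known reductions: the second-order estimate via the maximum principle applied to $\log \tr{g_\Omega}{\ti{g}} - A\varphi$ using the key inequality (\ref{key2}), the Calabi-type third-order estimate or Evans--Krylov alternative, and the bootstrapping to all higher orders, which together constitute Theorem \ref{TWY3} (and, with the Moser-type argument, Theorem \ref{TWY2}). But the decisive step --- a $C^0$ bound on $\varphi$, or equivalently a bound on $I_\alpha(\varphi) = \int_M e^{-\alpha\varphi}\,\Omega^n$, with \emph{no} smallness hypothesis on the Nijenhuis tensor --- is exactly where your text switches from proof to program: ``I would attempt to run the Moser iteration anyway, organizing the Nijenhuis error terms so that they are either controlled \ldots or absorbed.'' No mechanism for this absorption is given, and the paper explicitly identifies this as the missing ingredient: Step 2 of Yau's scheme fails for lack of a $\partial\ov{\partial}$-Lemma, and the result of \cite{We2} you invoke requires the Nijenhuis tensor to be suitably small. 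So the proposal reproduces what is known and halts precisely at the open problem; it is not a proof.

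There is also a concrete error in your first step. In the non-integrable case the Calabi--Yau equation does \emph{not} reduce to a scalar second-order PDE for a potential: the unknown is an exact form $\ti{\omega} = \Omega + da$ for a $1$-form $a$ constrained by compatibility with $J$, which is a genuine system, and the function $\varphi$ used in \cite{We2}, \cite{TWY} is defined \emph{from} a given solution by
\begin{equation*}
\ti{\Delta}\varphi = 2n - \tr{\ti{g}}{g_\Omega}, \qquad \sup_M \varphi = 0,
\end{equation*}
i.e.\ equation (\ref{phidef}); it does not parametrize $\ti{\omega}$, so bounds on $\varphi$ in every $C^k$ are not literally equivalent to (\ref{Ck}) without the intervening structure of the estimates in \cite{TWY}. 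Worse, the natural scalar ansatz $\ti{\omega} = \Omega + d(Jd\varphi)$ is precisely the equation studied by Delan\"oe \cite{De}, and the paper's remark recalls his theorem: in real dimension $4$, if that scalar equation is solvable for every datum $F$, then $J$ must be integrable. Hence the scalar reduction you propose cannot serve as the foundation of a proof of Conjecture \ref{conjecture1} for general almost complex $J$; any viable approach must work with the system (this is also where four-dimensionality and Donaldson's openness argument enter), and the conjecture remains open.
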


Note that the estimate (\ref{Ck}) on $\tilde{\omega}$ for $k=0$ together with the equation (\ref{CY1}) immediately imply the additional estimate
$$\tilde{g} \ge c \, g_{\Omega},$$
for some uniform constant $c=c(\Omega, J, \sigma)>0$, where $\tilde{g}$ is the metric associated to $\tilde{\omega}$.

It is perhaps worth remarking that Conjecture \ref{conjecture1} would be false in general if the cohomological  condition $\tilde{\omega} \in [\Omega]$ were removed, even in the K\"ahler case (cf. \cite{D}, Section 3.3).  Indeed, suppose that $(M, \Omega, J)$ were a K\"ahler manifold admitting a sequence of K\"ahler classes $\beta_i$ satisfying $\beta_i^2 = \int_M \sigma$ with a non-K\"ahler limit $\beta_{\infty} = \lim_{i \rightarrow \infty} \beta_i$ in $H^{1,1}(M; \mathbb{R})$.  By Yau's theorem one could find a sequence of K\"ahler metrics $\tilde{\omega}_i \in \beta_i$ satisfying $\tilde{\omega}_i^2 = \sigma$.  If the estimates of Conjecture \ref{conjecture1} held in this case then one could take a subsequential limit of the $\tilde{\omega}_i$ to obtain a K\"ahler metric in $\beta_{\infty}$, a contradiction.  (For a discussion of a related problem of the behavior of Ricci-flat metrics as the K\"ahler class degenerates, see \cite{To2}).

We expect that one could replace the assumption $\tilde{\omega} \in [\Omega]$ with a weaker condition which would ensure that the cohomology class $[\tilde{\omega}]$ remains bounded and uniformly distant from the boundary of the K\"ahler cone in the K\"ahler case.  By the characterizations of the K\"ahler cone due to Buchdahl \cite{Bu}, Lamari \cite{La} and Demailly-Paun \cite{DP}, an element $\beta \in H^{1,1}(M; \mathbb{R})$ is K\"ahler if it is numerically positive on analytic cycles and if it is also a limit of K\"ahler classes.  In light of this it seems natural to ask:

\begin{question}
Can one replace the assumption $\tilde{\omega} \in [\Omega]$ in Conjecture \ref{conjecture1} with conditions on 
\begin{enumerate}
\item[(a)] the boundedness of $[\tilde{\omega}]$ in $H^{1,1}(M;\mathbb{R})$; and
\item[(b)] the data $[\tilde{\omega}] \cdot C$, for $J$-holomorphic curves $C$ in $M$? 
\end{enumerate}
\end{question}

Although we will see that \emph{applications} of Conjecture \ref{conjecture1} do require the restriction to dimension 4, we do not know any counterexample to the conjecture itself in higher dimensions.
 We pose as a question:

\begin{question}
Does Conjecture \ref{conjecture1} hold in any dimension?
\end{question}

We now describe an application of Conjecture \ref{conjecture1}.  First, recall the well-known fact that given a general almost complex four-manifold $(M, J)$ which admits symplectic forms there may not exist a symplectic form $\omega$ compatible with $J$.  
Donaldson \cite{D} conjectured that the (obviously necessary) condition of the existence of a taming symplectic form for $J$ is sufficient for the existence of a compatible $\omega$.  Combining Donaldson's conjecture with a characterization of the existence of taming symplectic forms due to Sullivan \cite{Su} we get:

\begin{conjecture} \label{conjecture2}
Let $(M, J)$ be a compact almost complex four-manifold with $b^+(M)=1$. Then the following are equivalent:
\begin{enumerate}
\item[(i)] There exists a symplectic form on $M$ compatible with $J$.
\item[(ii)] There exists a symplectic form on $M$ taming $J$.
\item[(iii)] There is no nonzero closed positive current on $M$ which is of type $(1,1)$ with respect to $J$ and is homologous to zero.
\end{enumerate}
\end{conjecture}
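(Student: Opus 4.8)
The plan is to establish the three equivalences by treating the two ``soft'' implications separately from the single analytic one that relies on Conjecture \ref{conjecture1}. The implication (i)$\Rightarrow$(ii) is immediate, since a symplectic form compatible with $J$ is in particular tamed by $J$. The equivalence (ii)$\Leftrightarrow$(iii) is precisely Sullivan's duality theorem \cite{Su}: the easy direction pairs a taming form $\Omega$ against a nonzero closed positive $(1,1)$-current $T$ homologous to zero, giving $\langle \Omega, T\rangle >0$ by taming but $\langle \Omega, T\rangle = 0$ by exactness, a contradiction; the reverse direction is a Hahn--Banach separation between the closed cone of positive $(1,1)$-currents and the subspace of boundaries. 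Thus the entire remaining content reduces to (ii)$\Rightarrow$(i), which I would attack by a continuity method modelled on Yau's proof of Theorem \ref{yau1}, with Conjecture \ref{conjecture1} supplying the a priori estimates.

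For (ii)$\Rightarrow$(i), suppose $\Omega$ tames $J$. First I would choose a base point for the continuity path. Since the space of almost complex structures tamed by the fixed symplectic form $\Omega$ is contractible, and contains the (nonempty, contractible) set of those \emph{compatible} with $\Omega$, I can connect $J=J_1$ to some $J_0$ compatible with $\Omega$ through a path $J_t$, $t\in[0,1]$, of almost complex structures all tamed by $\Omega$. Setting $\sigma := \Omega^2$, which has $\int_M \sigma = [\Omega]^2$, the problem at each time is to find a symplectic form $\tilde\omega_t \in [\Omega]$ compatible with $J_t$ and solving the Calabi--Yau equation $\tilde\omega_t^2 = \sigma$. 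At $t=0$ the taming form itself solves this, since $\Omega$ is compatible with $J_0$ and $\Omega^2 = \sigma$, so the set $S \subseteq [0,1]$ of solvable times is nonempty.

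It then remains to show $S$ is open and closed. Closedness is where Conjecture \ref{conjecture1} enters: along a sequence $t_i \to t_\infty$ in $S$ the hypotheses hold with $\Omega$, $J_{t_i}$, $\sigma$, and because $J_{t_i} \to J_{t_\infty}$ smoothly on the compact interval the constants $A_k$ may be taken uniform, so Arzel\`a--Ascoli extracts a $C^\infty$ limit $\tilde\omega_{t_\infty}$; the $C^0$ bound together with the equation yields $\tilde g_{t_i} \ge c\, g_\Omega$ uniformly, so the limit is genuinely non-degenerate, compatible with $J_{t_\infty}$, and still lies in $[\Omega]$. For openness I would linearize the equation at a solution, reformulating it after \cite{We2} in terms of an almost-K\"ahler potential so that the linearization becomes a second-order elliptic operator; the implicit function theorem then produces solutions for nearby $t$ once this operator is surjective modulo constants. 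This last point is exactly where the hypothesis $b^+(M)=1$ is indispensable: it forces the space of self-dual harmonic two-forms to be one-dimensional, which controls the cokernel of the linearization and allows the compatible representative to be kept within the fixed class $[\Omega]$, in direct analogy with the role of the constants $c_t$ in Yau's path $(*)_t$.

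I expect the genuine obstacle to be twofold. Granting Conjecture \ref{conjecture1}, the estimates themselves are not the issue; rather, the delicate steps are (a) the openness analysis, i.e.\ verifying that the linearized Calabi--Yau operator in the almost-K\"ahler setting is Fredholm with cokernel exactly the constants, which is precisely what $b^+=1$ buys and what fails in general, and (b) ensuring that compatibility and non-degeneracy survive in the limit, for which the lower metric bound $\tilde g \ge c\, g_\Omega$ noted after Conjecture \ref{conjecture1} is essential. Of course the deepest difficulty is hidden inside Conjecture \ref{conjecture1} itself, whose resolution remains open; the argument above should be read as a reduction of (ii)$\Rightarrow$(i) to those a priori estimates.
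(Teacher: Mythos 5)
Your proposal follows the paper's own proof essentially step for step: (i)$\Rightarrow$(ii) trivially, (ii)$\Rightarrow$(iii) by pairing a taming form against a null-homologous positive $(1,1)$-current as in $0=\langle\Omega,T\rangle=\langle\Omega^{1,1},T\rangle>0$, (iii)$\Rightarrow$(ii) by Sullivan's theorem \cite{Su}, and (ii)$\Rightarrow$(i) by the identical continuity path $J_t$ tamed by $\Omega$ with the equation $\omega_t^2=\Omega^2$, closedness from Conjecture \ref{conjecture1} plus Arzel\`a--Ascoli, and $b^+(M)=1$ entering exactly to keep the cohomology class pinned at $[\Omega]$. The only cosmetic difference is that where you sketch a linearization/Fredholm argument for openness, the paper simply cites Proposition 1 of \cite{D} (noting it is special to four dimensions) and phrases the role of $b^+(M)=1$ as the span of $[\Omega]$ being a maximal positive subspace for the intersection form, which is the same mechanism you describe.
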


\begin{proof}[Proof that Conjecture \ref{conjecture1} implies Conjecture \ref{conjecture2}]

We clearly have that $(i)\Rightarrow (ii)$. The implication $(ii)\Rightarrow (iii)$ is also trivial: if $\Omega$ tames $J$ and $T$ is a nonzero null-homologous closed positive $(1,1)$ current, then
\begin{equation}\label{tamed}
0=\langle\Omega,T\rangle=\langle\Omega^{1,1},T\rangle>0,
\end{equation}
because $\Omega^{1,1}$ is positive definite.
The fact that $(iii)\Rightarrow(ii)$ is a theorem of Sullivan (Theorem III.2 in \cite{Su}). 

It remains to show that $(ii)\Rightarrow(i)$.   Following Donaldson's argument (see the description in \cite{We2}), 
 we fix $\Omega$ and a symplectic form taming $J$. We then choose $J_0$, an almost complex structure compatible with $\Omega$, and connect it to $J=J_1$ with a smooth path $J_t$, $0\leq t\leq 1$, of almost complex structures all tamed by $\Omega$. We then look for a symplectic form $\omega_t$ compatible with $J_t$ with
$
[\omega_t]\in[\Omega]$
and satisfying the Calabi-Yau equation
\begin{equation}\label{cyeqn}
\omega_t^2=\Omega^2.
\end{equation}
Setting $\omega_0=\Omega$ clearly solves this for $t=0$ and the set $\mathcal{T}$ of all $t\in[0,1]$ such that we have a solution $\omega_t$ is open by Proposition 1 of \cite{D}.  This openness argument crucially uses the assumption of four dimensions.  Note that since $b^+(M)=1$, the span of $[\Omega]$ in $H^2(M; \mathbb{R})$ is trivially a maximal positive subspace for the intersection form, and this ensures that we can solve (\ref{cyeqn}) for $\omega_t$ in the same cohomology class as $\Omega$.  This is the only part of the proof where we use the condition $b^+(M)=1$. 

Closedness of $\mathcal{T}$ follows from Conjecture \ref{conjecture1} together with the Ascoli-Arzel\`a theorem.  Thus we have a solution $\omega_1$ of (\ref{cyeqn}), a symplectic form compatible with $J_1=J$.
\end{proof}

\begin{remark} Gromov  had shown in \cite{Gr} that $(ii)$ implies $(i)$ holds in the special case of $M= \mathbb{P}^2$ when the symplectic form $\Omega$ is the standard one and $J$ is any almost complex structure tamed by $\Omega$.
\end{remark}

As pointed out in \cite{D}, Conjecture \ref{conjecture2} is interesting even in the case when $J$ is integrable.  Indeed, at least in the case $b^+(M)=1$,  one can use the result to give another proof of the following result of Miyaoka-Siu \cite{M}, \cite{Si} which does not use the classification of complex surfaces (there are already such proofs by Buchdahl \cite{Bu} and Lamari \cite{La}).

\begin{theorem} \label{theoremcomplexsurface}
If $M$ is a complex surface with $b^1(M)$ even then $M$ is K\"ahler.
\end{theorem}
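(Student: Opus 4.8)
The plan is to deduce the theorem from Conjecture \ref{conjecture2}, in the case $b^+(M)=1$ (for a complex surface with $b^1(M)$ even one has $b^+(M)=2p_g+1$, so this is precisely the case $p_g=0$, which is the generality claimed in the text). The first observation is that since $M$ is a complex surface, $J$ is integrable, and hence a symplectic form compatible with $J$ is the same thing as a K\"ahler form. Thus condition $(i)$ of Conjecture \ref{conjecture2} is exactly the assertion that $M$ is K\"ahler, and it suffices to verify condition $(iii)$: that there is no nonzero closed positive $(1,1)$-current on $M$ homologous to zero. Granting Conjecture \ref{conjecture2}, establishing $(iii)$ produces a compatible, i.e. K\"ahler, form.

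So suppose $T$ is a closed positive real $(1,1)$-current with $[T]=0$ in $H^2(M;\mathbb{R})$; I want to show $T=0$. Since $T$ is a real exact current I can write $T=d\gamma$ with $\gamma=\gamma^{1,0}+\ov{\gamma^{1,0}}$ a real $1$-current, and comparing bidegrees the vanishing of the $(2,0)$- and $(0,2)$-parts of $d\gamma$ forces $\partial\gamma^{1,0}=0$, so that $T=\ov{\partial}\gamma^{1,0}+\ov{\ov{\partial}\gamma^{1,0}}$. The key step is to promote this to a global potential: I claim that when $b^1(M)$ is even one can write $T=\sqrt{-1}\,\partial\ov{\partial}u$ for a globally defined real current $u$. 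Granting the claim, the argument concludes quickly. Fix a Gauduchon metric $\omega_G$, which exists on any compact complex surface and satisfies $\partial\ov{\partial}\omega_G=0$; then integration by parts gives $\langle T,\omega_G\rangle=\int_M \sqrt{-1}\,\partial\ov{\partial}u\wedge\omega_G=\int_M u\,\sqrt{-1}\,\partial\ov{\partial}\omega_G=0$. On the other hand $T\ge 0$ and $\omega_G>0$ make $T\wedge\omega_G$ a nonnegative measure, so $\langle T,\omega_G\rangle\ge 0$ with equality only if $T=0$. Hence $T=0$ and $(iii)$ holds.

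The main obstacle is exactly the claim $T=\sqrt{-1}\,\partial\ov{\partial}u$, and this is the one place where the hypothesis $b^1(M)$ even must be used essentially. Concretely, the current $\ov{\partial}\gamma^{1,0}$ is $d$-closed and $\ov{\partial}$-exact, and I need it to be $\partial\ov{\partial}$-exact; feeding this into the reality relation above then yields the real potential $u$. The hard part is that I cannot establish this bidegree-$(1,1)$ form of the $\partial\ov{\partial}$-lemma by invoking a K\"ahler metric, since for surfaces the full $\partial\ov{\partial}$-lemma is equivalent to being K\"ahler and that would be circular. Instead I would argue directly from Hodge theory on surfaces with $b^1(M)$ even: in that situation $b^1(M)=2h^{1,0}$, holomorphic $1$-forms are $d$-closed, and the degree-one Hodge decomposition holds, which is enough to control the relevant Bott--Chern/Aeppli-type obstruction in bidegree $(1,1)$ (this is in essence the analytic content isolated by Buchdahl and Lamari). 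Care must be taken to run the argument at the level of currents rather than smooth forms, for instance by regularizing or by using the adjoint operators of the fixed Gauduchon metric. Once this cohomological input is secured the remainder of the proof is formal, the only other use of the hypotheses being the reduction to $b^+(M)=1$ required to apply Conjecture \ref{conjecture2}.
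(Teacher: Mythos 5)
Your proposal is correct, but it enters Conjecture \ref{conjecture2} through a different door than the paper does. The paper verifies condition (ii) directly: it quotes the theorem of Harvey--Lawson (Theorem 26 of \cite{HL}), which for $b^1(M)$ even produces a real closed $2$-form $\Omega$ with $\Omega^{1,1}$ positive definite, and then observes that $\Omega^2=(\Omega^{1,1})^2+2\Omega^{2,0}\wedge\ov{\Omega^{2,0}}>0$, so $\Omega$ is a symplectic form taming $J$; the conjecture then yields a compatible, hence K\"ahler, form. You instead verify condition (iii) by hand and let the equivalence (in particular Sullivan's theorem \cite{Su}, (iii) $\Rightarrow$ (ii)) do the rest: you show that an exact positive $(1,1)$-current $T$ admits a global potential $T=\sqrt{-1}\,\partial\ov{\partial}u$, and then pair against a Gauduchon metric and use positivity to force $T=0$. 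Your key claim is exactly the lemma of Lamari \cite{La} (cf.\ Buchdahl \cite{Bu}), and your sketch of it is the right one and is not circular: on any compact surface holomorphic $1$-forms are $d$-closed, and $b^1$ even is equivalent to $H^{0,1}_{\ov{\partial}}$ being spanned by conjugates of holomorphic $1$-forms, so from $\partial\gamma^{1,0}=0$ one gets $\gamma^{1,0}=\theta+\partial f$ with $\theta$ holomorphic, whence $T=\ov{\partial}\gamma^{1,0}+\partial\ov{\gamma^{1,0}}=\sqrt{-1}\,\partial\ov{\partial}u$ with $u$ real; the passage to currents is harmless since Dolbeault cohomology computed with currents agrees with the smooth one. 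Your reduction $b^+(M)=2p_g+1$, hence $b^+=1 \Leftrightarrow p_g=0$, matches the paper's stated restriction exactly. What each approach buys: the paper's proof is two lines modulo the Harvey--Lawson black box, while yours unpacks essentially the same duality (Harvey--Lawson's theorem is itself proved by a Hahn--Banach argument excluding precisely such currents), making the analytic role of the hypothesis $b^1$ even explicit at the cost of proving the potential lemma and routing through Sullivan's theorem. The one step you should still write out in full is that potential lemma, but as indicated it is standard and your outline of it is sound.
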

\begin{proof}[Proof of Theorem \ref{theoremcomplexsurface}  assuming $b^+(M)=1$ and Conjecture \ref{conjecture2}]
%Standard results on complex surfaces imply that if $b^1(M)$ is even then $b^+(M)$ is odd (see Theorem IV.2.7 in \cite{BHPV})
A result of Harvey-Lawson (Theorem 26 and pag.185 in \cite{HL}) says that if $b^1(M)$ is even then we can find a real closed $2$-form $\Omega$ on $M$ such that $\Omega^{1,1}$ is positive definite. Then 
$$\Omega^2=(\Omega^{1,1})^2+2\Omega^{2,0}\wedge\overline{\Omega^{2,0}}$$
is a strictly positive $(2,2)$-form, hence $\Omega$ is a symplectic form taming $J$. Then Conjecture \ref{conjecture2} implies the existence of a symplectic form compatible with $J$, that is of a K\"ahler form.  Presumably, one ought to be able to remove the assumption $b^+(M)=1$ using appropriate generalizations of Conjecture \ref{conjecture1} and Conjecture \ref{conjecture2}.
\end{proof}

On the other hand, assuming the classification of surfaces (see \cite{BHPV}, for example) and Theorem 20 of \cite{HL}, it was shown by \cite{LZ} that:

\begin{theorem} \label{theoremLZ}
Conjecture \ref{conjecture2} holds in the case when $J$ is integrable, even when $b^+(M)>1$.
\end{theorem}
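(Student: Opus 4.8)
The plan is to isolate the single implication in Conjecture \ref{conjecture2} that used $b^+(M)=1$ and to re-prove it by complex-surface theory. The implications (i) $\Rightarrow$ (ii) $\Rightarrow$ (iii) and Sullivan's (iii) $\Rightarrow$ (ii) in the proof above are insensitive to $b^+(M)$; only (ii) $\Rightarrow$ (i), which invoked Conjecture \ref{conjecture1} and the maximality of $\mathrm{span}\,[\Omega]$, used $b^+(M)=1$. So (ii) $\Leftrightarrow$ (iii) is already available for any $b^+(M)$, and it suffices to prove (iii) $\Rightarrow$ (i) for an arbitrary compact complex surface $(M,J)$.

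For integrable $J$ a compatible symplectic form is a K\"ahler form, so (i) says that $M$ is K\"ahler, which by Miyaoka-Siu (Theorem \ref{theoremcomplexsurface}) is equivalent to $b^1(M)$ being even. I would prove the contrapositive of (iii) $\Rightarrow$ (i): if $M$ is non-K\"ahler, then (iii) fails, i.e. $M$ carries a nonzero closed positive $(1,1)$-current homologous to zero. The analytic input is Theorem 20 of Harvey-Lawson \cite{HL}, whose intrinsic characterization detects the failure of the K\"ahler condition on a surface through the existence of a nonzero positive $(1,1)$-current that is a boundary; a boundary current is $d$-exact, hence closed and null-homologous, which is exactly the object proscribed by (iii). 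This would close the loop (i) $\Leftrightarrow$ (ii) $\Leftrightarrow$ (iii) with no restriction on $b^+(M)$.

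The Enriques-Kodaira classification enters to make this current explicit on every non-K\"ahler surface and to discharge the cases in which the Harvey-Lawson current is produced only as the $(1,1)$-component of a boundary rather than as a genuine $d$-exact $(1,1)$-current. The non-K\"ahler surfaces are precisely those with $b^1$ odd: class VII surfaces, non-K\"ahler properly elliptic surfaces, the primary and secondary Kodaira surfaces, and blow-ups of these. In the simplest cases one reads off the current directly: a Hopf surface has $H^2(M;\mathbb{R})=0$, so integration over any of its elliptic curves is a null-homologous positive $(1,1)$-current; more generally one takes integration over a $J$-holomorphic curve whose class vanishes in $H^2(M;\mathbb{R})$, or a suitable limit supplied by the Harvey-Lawson machinery. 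Consistency with the other implications is transparent: such a current $T$ could never pair positively against a taming form, since $\langle\Omega,T\rangle=\langle\Omega^{1,1},T\rangle$ would be both strictly positive by positivity of $\Omega^{1,1}$ and equal to zero because $[T]=0$, exactly as in (\ref{tamed}).

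The main obstacle, I expect, is the class of non-K\"ahler surfaces that nonetheless admit symplectic forms --- the Kodaira surface being the classical symplectic but non-K\"ahler example --- since for these one cannot exclude a taming symplectic form by the cheap observation that $b^+(M)=0$ or $H^2(M;\mathbb{R})=0$ forbids any symplectic form whatsoever. Here one must genuinely exhibit the null-homologous positive $(1,1)$-current for the specific integrable $J$, and verify that the Harvey-Lawson current can be arranged to be closed, positive, of pure type $(1,1)$, and exact as demanded by (iii). Carrying this out uniformly over the classification list is the technical heart of the argument.
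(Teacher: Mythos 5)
You have correctly isolated the one implication that needs work: (i)$\Rightarrow$(ii)$\Rightarrow$(iii) and Sullivan's (iii)$\Rightarrow$(ii) are indeed insensitive to $b^+(M)$, and your ingredients --- the Enriques--Kodaira classification plus Theorem 20 of \cite{HL} --- are exactly the ones the paper attributes to \cite{LZ} (the paper itself gives no proof of Theorem \ref{theoremLZ}, only this citation). But your execution has a genuine structural gap, and you have located the difficulty in the wrong place. Your plan proves the contrapositive of (iii)$\Rightarrow$(i) \emph{directly}, which requires exhibiting a nonzero closed, exact, positive $(1,1)$-current on \emph{every} non-K\"ahler surface, including class VII surfaces with $b_2>0$. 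There your two proposed devices both fail: the existence of curves on arbitrary class VII surfaces is tied to the open global spherical shell conjecture, and the Harvey--Lawson current is only the $(1,1)$-\emph{component of a boundary}, $T=(dS)^{1,1}$, which the taming pairing does not proscribe --- $d\Omega=0$ does not make $\Omega^{1,1}$ closed, so the argument of (\ref{tamed}) gives no contradiction against such a $T$, and no mechanism for upgrading it to a genuinely $d$-exact closed current is known. You flag this upgrade problem honestly but leave it as the ``technical heart,'' and meanwhile you name the Kodaira surface as the main obstacle when it is in fact the \emph{easy} case: in the nilmanifold model the Poincar\'e dual of the elliptic fiber is an exact form, so integration over a fiber is immediately a nonzero null-homologous closed positive $(1,1)$-current.

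The gap is fixable by rerouting through Sullivan, and the repaired argument is the Li--Zhang proof. Since you already have (iii)$\Rightarrow$(ii) for all $b^+(M)$, the implication (iii)$\Rightarrow$(i) follows from (ii)$\Rightarrow$(i), so you may \emph{assume} a taming symplectic form $\Omega$ exists and derive K\"ahlerness; currents need never be produced on surfaces without taming forms. A taming form forces $b^+(M)\geq 1$, and on a compact complex surface with $b_1$ odd one has $b^+(M)=2h^{2,0}(M)$, so $p_g\geq 1$. This excludes class VII and secondary Kodaira surfaces (both have $p_g=0$) outright, and the classification leaves only blow-ups of primary Kodaira surfaces and of non-K\"ahler properly elliptic surfaces. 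On all of these the generic elliptic fiber $F$ has vanishing class in $H_2(M;\mathbb{R})$ (and this persists under blow-up, since the fiber misses the exceptional locus), so $T=[F]$ is a nonzero null-homologous closed positive $(1,1)$-current and (\ref{tamed}) gives the contradiction with taming --- no appeal to the unresolved Harvey--Lawson upgrade, and no class VII case at all. With that single restructuring your outline closes up into a complete proof along the same lines as the cited one.
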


It should be noted that, despite this result, Conjecture \ref{conjecture1} is still open in the case when $J$ is integrable.  This may be somewhat surprising.  Indeed, if $\Omega$ tames an integrable $J$ as in the statement of Conjecture \ref{conjecture1} then by Theorem \ref{theoremLZ} one obtains a K\"ahler form $\omega$ and Yau's estimates show that $\tilde{\omega}$ can be bounded in terms of $\omega$ and $\sigma$.  
However, since  there are no estimates available on the K\"ahler form $\omega$ in terms of the data $(\Omega, J)$, this falls short of what is needed for Conjecture \ref{conjecture1}.

Also, as one can see from the proof that Conjecture \ref{conjecture1} implies Conjecture \ref{conjecture2}, if Conjecture \ref{conjecture1} were to hold for $J$ integrable it would not (at least by the same argument) give another proof of Theorem \ref{theoremLZ}. 

We now mention another consequence of Conjecture \ref{conjecture1}.

\begin{conjecture} \label{conjecture3}
Let $(M, \omega)$ be a compact symplectic  four-manifold with a compatible almost complex structure $J$.  Assume $b^+(M)=1$.  Then for any smooth volume form $\sigma$ on $M$ with $\int_M \sigma = \int_M \omega^2$ there exists a unique symplectic form $\tilde{\omega} \in [\omega]$ on $M$ compatible with $J$, solving the Calabi-Yau equation
\begin{equation} \label{CY2}
\tilde{\omega}^2 = \sigma.
\end{equation}
\end{conjecture}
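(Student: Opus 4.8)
The plan is to establish existence by a continuity method that reduces the problem to the a priori estimates of Conjecture \ref{conjecture1}, in exactly the way Yau's continuity argument reduces Theorem \ref{yau1} to the estimates of Theorem \ref{yauestimates}; uniqueness, by contrast, I would prove directly and unconditionally by Hodge theory on the four-manifold. For existence, write $\sigma = e^F \omega^2$ for a smooth function $F$ and consider the family of equations $\omega_t^2 = e^{tF + c_t}\omega^2$ for $t \in [0,1]$, with constants $c_t$ defined by $e^{-c_t} = \int_M e^{tF}\omega^2 / \int_M \omega^2$ so that each right-hand side integrates to $\int_M \omega^2$. The normalization $\int_M \sigma = \int_M \omega^2$ forces $c_0 = c_1 = 0$, so $\omega_0 = \omega$ solves the equation at $t=0$ and a solution at $t=1$ solves (\ref{CY2}). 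I would then let $\mathcal{T}$ be the set of $t \in [0,1]$ for which there is a symplectic form $\omega_t \in [\omega]$, compatible with $J$ and solving the $t$-th equation, and show $\mathcal{T}$ is nonempty, open and closed.

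Openness of $\mathcal{T}$ follows from the implicit function theorem in the form of Proposition 1 of \cite{D}: once the deformation is described by a single almost-K\"ahler potential (equivalently, a co-closed $1$-form $a_t$ with $\omega_t = \omega + da_t$), the linearization of the Calabi-Yau operator is a determined elliptic problem in four real dimensions, and here the hypothesis $b^+(M) = 1$ is what permits one to keep the deformed form in the fixed cohomology class $[\omega]$, exactly as in the proof that Conjecture \ref{conjecture1} implies Conjecture \ref{conjecture2}. Closedness is where Conjecture \ref{conjecture1} enters: for $t$ ranging over the compact interval the volume forms $e^{tF+c_t}\omega^2$ vary smoothly and are uniformly controlled, so Conjecture \ref{conjecture1} (applied with reference taming form $\Omega = \omega$) yields uniform $C^k$ bounds on the $\omega_t$, and by Ascoli-Arzel\`a a sequence $\omega_{t_i}$ with $t_i \to t_\infty$ subconverges in $C^\infty$ to a limit $\omega_{t_\infty}$. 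The $k=0$ bound together with the equation gives $\tilde{g} \ge c\, g_\omega$, so the limit is nondegenerate and hence symplectic; compatibility with $J$, membership in $[\omega]$, and the equation all pass to the $C^\infty$ limit. Thus $\mathcal{T} = [0,1]$ and $\omega_1$ is the desired solution.

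For uniqueness, suppose $\tilde{\omega}_1, \tilde{\omega}_2 \in [\omega]$ are $J$-compatible symplectic forms with $\tilde{\omega}_1^2 = \tilde{\omega}_2^2 = \sigma$. Set $\eta = \tilde{\omega}_1 - \tilde{\omega}_2$ and $\Theta = \tilde{\omega}_1 + \tilde{\omega}_2$; then $\Theta$ is again a $J$-compatible symplectic form, $\eta$ is a real $(1,1)$-form with respect to $J$, and $\eta$ is exact since both forms lie in $[\omega]$. The equal-volume condition gives $\eta \wedge \Theta = \tilde{\omega}_1^2 - \tilde{\omega}_2^2 = 0$, so $\eta$ is primitive with respect to $\Theta$; as a real primitive $(1,1)$-form on the almost-Hermitian four-manifold $(M, g_\Theta, J)$ it is anti-self-dual, $*_\Theta \eta = -\eta$ (this uses only the pointwise splitting $\Lambda^+ = \mathbb{R}\Theta \oplus \Lambda^{2,0}_\mathbb{R}$, $\Lambda^- = \Lambda^{1,1}_{0,\mathbb{R}}$, not integrability of $J$). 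Since $d^*_\Theta$ agrees up to sign with $*_\Theta d *_\Theta$ on $2$-forms, and $\eta$ is closed, $*_\Theta\eta = -\eta$ forces $d^*_\Theta \eta = 0$, so $\eta$ is harmonic for $g_\Theta$. An exact harmonic form on a compact manifold is $L^2$-orthogonal to itself and hence vanishes, so $\eta = 0$ and $\tilde{\omega}_1 = \tilde{\omega}_2$. Note that this argument requires neither integrability of $J$ nor Conjecture \ref{conjecture1}.

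The genuine analytic content sits entirely in the closedness step and is precisely the a priori estimates postulated by Conjecture \ref{conjecture1}; the main obstacle to a complete, unconditional proof is therefore the same $C^0$ bound on the almost-K\"ahler potential discussed in Section \ref{estimates}. Within the reduction itself the two delicate points are the ellipticity underlying the openness argument, which is what confines the whole program to four real dimensions, and the preservation of the cohomology class along the continuity path, where the assumption $b^+(M) = 1$ is essential and, as in Conjecture \ref{conjecture2}, is the only place that hypothesis is used.
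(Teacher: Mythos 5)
Your proposal is correct, and its existence half is the paper's own proof almost verbatim: the same continuity family $\tilde{\omega}_t^2 = e^{tF+c_t}\omega^2$ with the same normalization of $c_t$, openness via Proposition 1 of \cite{D} with $b^+(M)=1$ keeping the path in the class $[\omega]$, and closedness from the a priori estimates of Conjecture \ref{conjecture1} together with Ascoli--Arzel\`a, the limit being nondegenerate thanks to the lower bound $\tilde{g} \ge c\, g_{\omega}$ noted after Conjecture \ref{conjecture1}. Where you genuinely diverge is uniqueness. The paper records Donaldson's argument: at each point one diagonalizes $\tilde{\omega}_2$ against $\tilde{\omega}_1$ in a unitary frame, the equal-volume condition forcing eigenvalues $\lambda$ and $1/\lambda$, whence $(\tilde{\omega}_1-\tilde{\omega}_2)^2 = \tilde{\omega}_1^2\left(2-\left(\lambda+\frac{1}{\lambda}\right)\right) \le 0$ pointwise with equality iff $\lambda=1$; integrating this against $\int_M(\tilde{\omega}_1-\tilde{\omega}_2)^2 \ge 0$, valid whenever $[\tilde{\omega}_1]-[\tilde{\omega}_2]$ lies in a maximal positive subspace $H_2^+$, gives the conclusion. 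Your route --- $\eta := \tilde{\omega}_1-\tilde{\omega}_2$ is a closed primitive real $(1,1)$-form for $\Theta := \tilde{\omega}_1+\tilde{\omega}_2$, hence anti-self-dual for $g_\Theta$, hence co-closed and harmonic, and an exact harmonic form vanishes --- is also correct: every pointwise ingredient (the splitting of $\Lambda^{\pm}$, $d^* = \pm * d\, *$ on $2$-forms) needs no integrability of $J$, and your anti-self-duality statement is precisely the invariant repackaging of the paper's pointwise inequality, since $*_\Theta\eta = -\eta$ is equivalent to $\eta\wedge\eta = -|\eta|^2_{g_\Theta}\, dV_{g_\Theta} \le 0$. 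Two comparative remarks: first, as written your Hodge argument handles only the cohomologous case ($\eta$ exact), which suffices for Conjecture \ref{conjecture3} but is weaker than the $H_2^+$ version the paper states; your own pointwise observation recovers the general case if you replace ``exact harmonic form vanishes'' by the chain $0 \le \int_M \eta\wedge\eta = -\|\eta\|^2_{L^2(g_\Theta)} \le 0$ for $[\eta] \in H_2^+$. Second, both uniqueness proofs are unconditional (independent of Conjecture \ref{conjecture1}), and yours buys a cleaner conceptual framing at the cost of invoking the self-duality decomposition of almost-Hermitian four-manifolds, whereas the paper's frame computation is entirely elementary.
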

 
We remark that the uniqueness part of Conjecture \ref{conjecture3} is already known to hold (cf. \cite{D} and also \cite{We2}).  Indeed, Donaldson proved the following stronger uniqueness result which does not require the assumption $b^+(M)=1$.  Fix a maximal positive subspace $H_2^+ \subset H^2(M; \mathbb{R})$ for the intersection form on $M$.  Then if $\tilde{\omega}_1, \tilde{\omega}_2 \in [\omega] + H_2^+$ satisfy 
\begin{equation} \label{CYuniqueness}
\tilde{\omega}_1^2 = \tilde{\omega}_2^2,
\end{equation}
it follows that $\tilde{\omega}_1 = \tilde{\omega}_2$.  Of course, the case $b^+(M)=1$ corresponds to taking $H_2^+$ to be the span of $[\omega]$.

 To prove this general uniqueness result we can argue as follows.  Since $[\tilde{\omega}_1] - [\tilde{\omega}_2] \in H_2^+$, we have
\begin{equation} \label{u1}
\int_M ( \tilde{\omega}_1 - \tilde{\omega}_2)^2 \ge 0.
\end{equation}
Using (\ref{CYuniqueness}), we can find a unitary frame $\theta_1, \theta_2$ with respect to $(\tilde{\omega}_1, J)$, at a fixed point $p$ in $M$, so that 
$$\tilde{\omega}_1 = \sqrt{-1}  \theta_1 \wedge \overline{\theta}_1 + \sqrt{-1} \theta_2 \wedge \overline{\theta}_2, \qquad \tilde{\omega}_2 = \sqrt{-1} \lambda \theta_1 \wedge \overline{\theta}_1 +  \frac{\sqrt{-1}}{\lambda}  \theta_2 \wedge \overline{\theta}_2,$$
for some positive constant $\lambda$.  Moreover, 
\begin{equation} \label{u2}
( \tilde{\omega}_1 - \tilde{\omega}_2)^2 = \tilde{\omega}_1^2 \left(2 - \left( \lambda + \frac{1}{\lambda} \right) \right) \le 0,
\end{equation}
with equality if and only if $\lambda =1$.  Then from (\ref{u1}) and (\ref{u2}) we obtain $\tilde{\omega}_1 = \tilde{\omega}_2$ as required.

We now explain how Conjecture \ref{conjecture3} follows from Conjecture \ref{conjecture1}.

\begin{proof}[Proof that Conjecture \ref{conjecture1} implies Conjecture \ref{conjecture3}]  This is contained in \cite{We2}, but we outline the proof here for the reader's convenience.
Write $\sigma=e^F\omega^2$ for some smooth function $F$. We then consider the Calabi-Yau equations
\begin{equation}\label{cyeqns}
\ti{\omega}_t^2=e^{tF+c_t}\omega^2,
\end{equation}
where $0\leq t\leq 1$, each $\ti{\omega}_t$ is a symplectic form compatible with $J$, with cohomology class
$[\ti{\omega}_t]=[\omega]$ and the constants $c_t$ are chosen so that the integrals of both sides of \eqref{cyeqns} match. 
Then we have the trivial solution $\ti{\omega}_0=\omega$ at $t=0$ and the set of all $t\in[0,1]$ such that we have a solution $\ti{\omega}_t \in [\omega]$ is open by Proposition 1 of \cite{D}. Then Conjecture \ref{conjecture1} together with the Ascoli-Arzel\`a theorem implies closedness, and so the existence of a solution for $t=1$. 
\end{proof}

\begin{remark}
Delan\"oe \cite{De} considered a related problem concerning the Calabi-Yau equation. He investigated
solutions of $\ti{\omega}^n=e^F\omega^n$, on an almost-K\"ahler manifold $(M, \omega, J)$ of
dimension $2n$, of the form $\ti{\omega} = \omega + d(Jd\varphi)$ for a smooth real function
$\varphi$ so that $\ti{\omega}$ tames $J$ (here $J$ acts on $1$-forms by duality). He showed that in real dimension 4, if there
exists such a solution for every smooth function $F$, then $J$ must be integrable. \end{remark}

Finally we consider the analogue of Theorem \ref{yau2}.  Suppose, as in Conjecture \ref{conjecture1}, that $M$ admits a symplectic form $\Omega$ taming an almost complex structure $J$.
Let $\nabla$ be an affine connection $M$.  We say that $\nabla$ is an \emph{almost-Hermitian connection} if
$$\nabla J = 0=\nabla g_\Omega.$$
It is well-known (see e.g. \cite{KN}) that almost-Hermitian connections always exist, and we will assume that $\nabla$ is one of them. 
Choose a local unitary frame $\{ e_1, \ldots, e_n \}$ for
$g_\Omega$, and let $\{ \theta^1, \ldots, \theta^n \}$ be a dual coframe. 
Then locally there exists a matrix of complex valued 1-forms $\{ \theta_i^j \}$, called the connection 1-forms, such that
$$\nabla e_i = \theta_i^j e_j.$$
Applying $\nabla$ to $g_\Omega(e_i, \ov{e_j})$ we see that $\{ \theta_i^j \}$ satisfies the skew-Hermitian property
$$\theta_i^j + \ov{\theta_j^i} = 0.$$
Now define the torsion $\Theta = (\Theta^1, \ldots, \Theta^n)$ of $\nabla$ by
\begin{equation} \label{eqnstructure1}
d \theta^i = - \theta_j^i \wedge \theta^j + \Theta^i, \qquad \textrm{for } i=1, \ldots, n.
\end{equation}
Notice that the $\Theta^i$ are 2-forms.  Equation (\ref{eqnstructure1}) is known as the first structure equation.
Define the curvature $\Psi = \{ \Psi_j^i \}$   of $\nabla$ by
\begin{equation} \label{eqnstructure2}
d \theta_j^i = - \theta_k^i \wedge \theta_j^k + \Psi_j^i.
\end{equation}
Note that $\{ \Psi_j^i \}$ is a  skew-Hermitian matrix of 2-forms.  Equation (\ref{eqnstructure2}) is known as the second structure equation.
Differentiating \eqref{eqnstructure2} we see that the real 2-form $\frac{\sqrt{-1}}{2\pi}\Psi_i^i$ is closed (here we are summing over $i$), and by Chern-Weil theory it represents the first Chern class $c_1(M,\Omega)$.  Associated to an almost-Hermitian manifold $(M, g_{\Omega}, J)$ is a unique \emph{canonical connection} $\nabla$ satisfying the conditions:
\begin{enumerate}
\item[(i)] $\nabla J = 0 = \nabla g_{\Omega}$.
\item[(ii)] The torsion $(\Theta^j)$, viewed as a $T'M$-valued 2-form, has vanishing $(1,1)$-part.
\end{enumerate}
We will denote by $\Ric(g_\Omega, J)$ the $2$-form $\sqrt{-1}\Psi_i^i$
computed with the canonical connection. In general it is not of type $(1,1)$, but if $(M, g_\Omega, J)$ is K\"ahler then $\Ric(g_\Omega,J)$ is just the standard Ricci form. We then have the following conjecture, which should be compared with Theorem \ref{yau2}.
\begin{conjecture}\label{conjecture4}
Let $(M,\Omega)$ be a compact symplectic four-manifold with $b^+(M)=1$ equipped with an almost complex structure $J$ tamed by $\Omega$. Then for every smooth function $F$ on $M$ there exists a unique symplectic form $\ti{\omega}\in[\Omega]$ compatible with $J$ satisfying
\begin{equation}\label{ricciforms}
\Ric(\ti{g},J)=\Ric(g_\Omega,J)+\frac{1}{2}d(JdF).
\end{equation}
\end{conjecture}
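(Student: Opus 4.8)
The plan is to reduce \eqref{ricciforms} to a Calabi--Yau equation $\ti{\omega}^2=\sigma$ and then to invoke Conjecture \ref{conjecture1}, mirroring the deduction of Theorem \ref{yau2} from Theorem \ref{yau1} in the K\"ahler case. The bridge is a transformation law for the canonical Ricci form under a change of $J$-Hermitian metric: for any two almost-Hermitian metrics $g_1,g_2$ on $(M,J)$ one expects
\begin{equation}\label{plan-transf}
\Ric(g_1,J)-\Ric(g_2,J)=-\tfrac{1}{2}\,d\!\left(Jd\log\frac{\mathrm{vol}(g_1)}{\mathrm{vol}(g_2)}\right),
\end{equation}
the exact almost-complex analogue of $\Ric(\ti{\omega})-\Ric(\omega)=-\sqrt{-1}\,\de\dbar\log(\ti{\omega}^n/\omega^n)$. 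In particular \eqref{plan-transf} shows that changing the compatible metric moves $\Ric(g_\Omega,J)$ only by an exact form of the shape $-\tfrac12 d(Jd\phi)$, which explains why the conjecture prescribes exactly the right-hand side of \eqref{ricciforms} rather than an arbitrary representative of $2\pi c_1(M,J)$ (for which no $\de\dbar$-lemma is available, cf. the remark after \eqref{Psi}). Granting \eqref{plan-transf}, the statement becomes conditional only on Conjecture \ref{conjecture1}, hence on Conjectures \ref{conjecture2} and \ref{conjecture3}.

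First I would prove \eqref{plan-transf}. The canonical connection induces a connection on the canonical line bundle of $(M,J)$, which depends only on $J$, and whose trace curvature is $\Ric(g,J)=\sqrt{-1}\,\Psi_i^i$. The two canonical connections attached to $g_1$ and $g_2$ therefore differ by a globally defined $1$-form $\alpha$, and the left side of \eqref{plan-transf} equals $-\sqrt{-1}\,d\alpha$. Compatibility of each connection with its induced Hermitian metric pins down $\mathrm{Re}\,\alpha=-\tfrac12 d\phi$, where $\phi=\log(\mathrm{vol}(g_1)/\mathrm{vol}(g_2))$. The delicate point is the $(0,1)$-part of $\alpha$, and here I would use condition (ii) in the definition of the canonical connection: writing out the vanishing of the $(1,1)$-part of the torsion on $\ov{X}\wedge Y$ with $\ov{X}\in T^{0,1}M$, $Y\in T'M$ gives $\nabla_{\ov{X}}Y=[\ov{X},Y]^{1,0}$, an operator depending only on $J$. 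Hence both canonical connections have the same $(0,1)$-part on the canonical bundle, so $\alpha^{0,1}=0$ and $\alpha=-\de\phi$. Finally $-\sqrt{-1}\,d\alpha=\sqrt{-1}\,d\de\phi=-\tfrac12 d(Jd\phi)$, the last equality because $dd\phi=0$ makes $d\de\phi$ purely imaginary; the essential thing to check is that no Nijenhuis correction survives, and it does not, precisely because the torsion condition renders $\nabla^{0,1}$ metric-independent.

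With \eqref{plan-transf} in hand the reduction is immediate. Using $\mathrm{vol}(\ti{g})=\tfrac12\ti{\omega}^2$ and $\mathrm{vol}(g_\Omega)=\tfrac12(\Omega^{1,1})^2$ and writing $\phi=\log(\ti{\omega}^2/(\Omega^{1,1})^2)$, equation \eqref{ricciforms} becomes $d(Jd(\phi+F))=0$. An integration by parts shows that the kernel of $u\mapsto d(Jdu)$ is just the constants: if $d(Jdu)=0$ then, since $\Omega$ tames $J$,
\begin{equation}\label{plan-kernel}
0=\int_M u\,\Omega\wedge d(Jdu)=-\int_M du\wedge Jdu\wedge\Omega^{1,1},
\end{equation}
and $du\wedge Jdu\wedge\Omega^{1,1}$ is a nonnegative top-form vanishing only where $du=0$, forcing $u$ constant. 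Thus $\phi+F\equiv c$, i.e. \eqref{ricciforms} is equivalent to the Calabi--Yau equation $\ti{\omega}^2=e^{c-F}(\Omega^{1,1})^2=:\sigma$, with $c$ fixed by $\int_M\sigma=\int_M\Omega^2=[\Omega]^2$; conversely any such solution satisfies \eqref{ricciforms} directly by \eqref{plan-transf}.

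It remains to solve and characterize $\ti{\omega}^2=\sigma$ with $\ti{\omega}\in[\Omega]$ compatible with $J$. For existence one applies Conjecture \ref{conjecture2} to produce a compatible seed $\omega_c\in[\Omega]$ with $\omega_c^2=\Omega^2$, and then runs the continuity method of the proof that Conjecture \ref{conjecture1} implies Conjecture \ref{conjecture3}: openness is Proposition 1 of \cite{D}, and the a priori bounds of Conjecture \ref{conjecture1} with Ascoli--Arzel\`a give closedness. For uniqueness, if $\ti{\omega}_1,\ti{\omega}_2\in[\Omega]$ both solve \eqref{ricciforms}, then \eqref{plan-transf} and the kernel computation \eqref{plan-kernel} give $\ti{\omega}_1^2=e^{c'}\ti{\omega}_2^2$, and integrating (both classes equal $[\Omega]$) forces $c'=0$, so $\ti{\omega}_1^2=\ti{\omega}_2^2$ and hence $\ti{\omega}_1=\ti{\omega}_2$ by Donaldson's uniqueness theorem recalled above, using $b^+(M)=1$. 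The main obstacle is therefore not the reduction but the existence half, which rests squarely on the still-open estimates of Conjecture \ref{conjecture1}; among the unconditional ingredients, verifying that the transformation law \eqref{plan-transf} holds with no torsion correction is the only genuinely new computation.
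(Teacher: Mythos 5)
Your proposal is correct and follows essentially the same route as the paper's own deduction: the paper likewise normalizes $F$, solves the Calabi--Yau equation \eqref{monge} via Conjecture \ref{conjecture3}, converts it into \eqref{ricciforms} by the transformation law \eqref{dlog} for the canonical Ricci form, and obtains uniqueness by running \eqref{dlog} backwards. The differences are worth recording. Where the paper simply cites ``the computation to derive (3.16) in \cite{TWY}'' for \eqref{dlog}, you give a self-contained, frame-free proof: the torsion condition (ii) forces $\nabla_{\ov{X}}Y=[\ov{X},Y]^{1,0}$, so the $(0,1)$-part of the induced connection on the canonical bundle is metric-independent, and the difference of the two trace connection forms is $\de$ of the log volume ratio. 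This is a clean replacement for the moving-frames computation of \cite{TWY}, and it is the one genuinely new ingredient in your writeup. You also make explicit two points the paper leaves implicit: that the kernel of $u\mapsto d(Jdu)$ consists of constants (your integration by parts against $\Omega$ is exactly what is needed for the paper's assertion that \eqref{dlog} and \eqref{normaliz} ``conversely'' imply \eqref{mongeloc}), and that one must first manufacture a \emph{compatible} seed form in $[\Omega]$ before Conjecture \ref{conjecture3} can be invoked, since in Conjecture \ref{conjecture4} the form $\Omega$ only tames $J$. On this last point note a small attribution slip: the seed $\omega_c\in[\Omega]$ with $\omega_c^2=\Omega^2$ comes from the \emph{proof} that Conjecture \ref{conjecture1} implies Conjecture \ref{conjecture2} (where $b^+(M)=1$ keeps the continuity path in the class $[\Omega]$), not from the bare statement of Conjecture \ref{conjecture2}, which only furnishes some compatible form in some class. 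Finally, a sign-convention caveat: the paper fixes $\frac12 d(JdF)=-\sqrt{-1}\,\de\dbar F$ for integrable $J$, under which the transformation law carries a plus sign, $\Ric(\ti{g},J)-\Ric(g_\Omega,J)=+\frac12 d\left(Jd\log\frac{\det\ti{g}}{\det g_\Omega}\right)$ as in \eqref{dlog}; your version has a minus sign because your identity $\sqrt{-1}\,d\de\phi=-\frac12 d(Jd\phi)$ presupposes the opposite duality action of $J$ on $1$-forms. This is internally consistent and harmless for the statement (it amounts to replacing $F$ by $-F$, and $F$ is arbitrary), but it should be aligned with the paper's convention before being merged.
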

Notice that if $J$ is integrable then
$\frac{1}{2}d(JdF)=-\sqrt{-1}\partial\overline{\partial}f$, and if $(M,g_\Omega, J)$ is K\"ahler then the $\partial\overline{\partial}$-lemma implies that by varying $F$, the right hand side of \eqref{ricciforms} can be made equal to any representative of $2\pi c_1(M)$. We do not expect this to hold in general.
\begin{proof}[Proof that Conjecture \ref{conjecture1} implies Conjecture \ref{conjecture4}] We are free to add a constant to $F$ so that it satisfies
\begin{equation}\label{normaliz}
\int_M e^{\frac{F}{2}}dV_{g_\Omega}=\int_M\frac{\Omega^2}{2}.
\end{equation}
Since Conjecture \ref{conjecture1} implies Conjecture \ref{conjecture3}, we can find a unique $\ti{\omega}\in[\Omega]$ compatible with $J$ satisfying
\begin{equation}\label{monge}
\frac{\ti{\omega}^2}{2}=e^{\frac{F}{2}} dV_{g_\Omega}.
\end{equation}
This can be written locally in terms of the metrics $\tilde{g}$ and $g_{\Omega}$ as
\begin{equation}\label{mongeloc}
\det \ti{g}=e^{F}\det g_\Omega,
\end{equation}
and the computation to derive (3.16) in \cite{TWY} gives
\begin{equation}\label{dlog}
\frac{1}{2}d(JdF)=\frac{1}{2}d\left(Jd\log \frac{\det\ti{g}}{\det g_\Omega}\right)=\Ric(\ti{g},J)-\Ric(g_\Omega,J),
\end{equation}
as required. The uniqueness statement follows easily once one notices that conversely \eqref{dlog} and \eqref{normaliz} imply \eqref{mongeloc} and so also \eqref{monge}.
\end{proof}

\setcounter{equation}{0}
\section{Estimates for the Calabi-Yau equation} \label{estimates}

In this section we describe a number of estimates for the Calabi-Yau equation which make some progress towards Conjecture \ref{conjecture1}.

In \cite{TWY}, it was shown that Conjecture \ref{conjecture1} holds, in any dimension, assuming a positive curvature condition on the fixed metric $g_{\Omega}$. 
The key to this result is to work with a good choice of local frame, an important technique for these kinds of problems (cf. \cite{To1}).

As in the previous section we let $(M,\Omega)$ be a compact symplectic four-manifold, $J$ be an almost complex structure tamed by $\Omega$ and $g_\Omega$ be the associated almost-Hermitian metric. Let $\nabla$ be  the canonical connection of $(M, g_\Omega, J)$ and we define a modified curvature tensor $\mathcal{R}_{i \ov{j} k \ov{\ell}}$ as follows:
$$\mathcal{R}_{i \ov{j} k \ov{\ell}} = R^j_{i k \ov{\ell}} + 4 N^r_{\ov{\ell} \, \ov{j}} \ov{N^i_{\ov{r} \, \ov{k}}},$$
where $R^j_{i k \ov{\ell}}$ is the $(1,1)$-part of the curvature of $\nabla$ and $N^r_{\ov{\ell} \, \ov{j}}$ is the Nijenhuis tensor, which can also be viewed as the $(0,2)$-part of the torsion of $\nabla$.  In the case when the data $(g_{\Omega}, J)$ is K\"ahler, the tensor $\mathcal{R}_{i \ov{j} k \ov{\ell}}$ coincides with the usual curvature tensor.  We write $\mathcal{R} \ge 0$ if the modified curvature tensor is nonnegative in the Griffiths sense, that is, if 
$$\mathcal{R}_{i \ov{j} k \ov{\ell}} X^i \ov{X^j} Y^k \ov{Y^{\ell}} \ge 0, \qquad \textrm{for all (1,0) vectors } X, Y.$$

Then in \cite{TWY} it is shown that:

\begin{theorem} \label{TWY1}
If $\mathcal{R} (g_{\Omega}, J) \ge 0$ then Conjecture \ref{conjecture1} holds.  Moreover, the analogous conjecture holds for manifolds of any even dimension.
\end{theorem}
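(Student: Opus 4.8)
The plan is to read the statement as an a priori estimate: assuming $\ti\omega\in[\Omega]$ is compatible with $J$ and solves $\ti\omega^n=\sigma$, I would bound $\|\ti\omega\|_{C^k(g_\Omega)}$ for every $k$, with smooth dependence on $\Omega$, $J$ and $\sigma$. As in Yau's proof of Theorem \ref{yauestimates}, the whole argument rests on a single second-order estimate, a uniform upper bound
$$\tr{g_\Omega}{\ti g}\leq C.$$
Once this is available, the equation in the form $\det\ti g=e^{F}\det g_\Omega$ forces the reverse bound, so that $c\,g_\Omega\leq\ti g\leq C\,g_\Omega$; the Calabi-Yau equation is then uniformly elliptic and concave, and the complex version of the Evans-Krylov theorem followed by Schauder bootstrapping yields all the constants $A_k$. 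None of these steps is special to four dimensions, which is exactly why the conclusion holds in any even dimension: the restriction $n=2$ in Conjecture \ref{conjecture1} concerns only the solvability of $(\omega+da)^n=\sigma$, not the control of a solution assumed to exist.

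For the second-order estimate I would use the good-frame technique of \cite{To1} and \cite{TWY}. Fixing a point and choosing a local unitary coframe $\{\theta^i\}$ for the canonical connection $\nabla$ that is normal there, I apply the maximum principle to
$$Q=\log\tr{g_\Omega}{\ti g}-A\varphi,$$
where $\varphi$ is the almost-K\"ahler potential of \cite{We2} and $A$ is large. Differentiating $\det\ti g=e^F\det g_\Omega$ twice and computing $\Delta_{\ti g}\log\tr{g_\Omega}{\ti g}$ by the almost-Hermitian Bochner method produces a lower bound for $\Delta_{\ti g}Q$ in which the term built from $-A\varphi$ supplies a large positive multiple of $\tr{\ti g}{g_\Omega}$ (in the integrable case this is the familiar $-A\,\Delta_{\ti g}\varphi=-A(n-\tr{\ti g}{g_\Omega})$). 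Evaluating at the maximum of $Q$ then bounds $\tr{\ti g}{g_\Omega}$, hence $\tr{g_\Omega}{\ti g}$, in terms of the oscillation of $\varphi$; the required $C^0$ bound on $\varphi$ comes from Moser iteration as in \cite{We2}, and this closes the estimate.

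The main obstacle, and the sole place where the hypothesis $\cur(g_\Omega,J)\geq 0$ enters, is the computation of $\Delta_{\ti g}\log\tr{g_\Omega}{\ti g}$. In the K\"ahler case this yields only a bisectional-curvature term, which is automatically bounded below on a compact manifold and causes no trouble. When $J$ is not integrable, however, the canonical connection has nonzero torsion whose $(0,2)$-part is the Nijenhuis tensor $N$, so that the derivatives $\nabla_i\nabla_{\ov j}$ appearing when one relates $\Delta_{\ti g}$ of the trace to the twice-differentiated equation fail to commute by curvature \emph{and} torsion terms. Commuting them generates, beyond the genuine curvature $R^j_{ik\ov\ell}$, a collection of terms quadratic in $N$ together with terms in its covariant derivatives. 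The crucial algebraic step is to verify that these organize precisely into the modified curvature $\cur_{i\ov jk\ov\ell}=R^j_{ik\ov\ell}+4N^r_{\ov\ell\,\ov j}\,\ov{N^i_{\ov r\,\ov k}}$, contracted against $\ti g^{i\ov j}$ and the frame so as to appear with a definite sign, while the dangerous first-derivative-of-$N$ terms either cancel or are absorbed; here the defining property of the canonical connection, that the $(1,1)$-part of its torsion vanishes, is essential. Granting this, the Griffiths nonnegativity $\cur\geq 0$ makes the whole contribution nonnegative and lets it be discarded from the lower bound for $\Delta_{\ti g}Q$, exactly as the bisectional-curvature term is discarded in Yau's calculation. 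This bookkeeping of the torsion terms, rather than the subsequent elliptic theory, is where I expect the real work to lie.
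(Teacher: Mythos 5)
There is a genuine gap, and it sits exactly where you delegate the hardest step: your scheme bounds $\tr{\ti{g}}{g_\Omega}$ at the maximum of $Q=\log\tr{g_\Omega}{\ti{g}}-A\varphi$ \emph{in terms of the oscillation of $\varphi$}, and you then assert that "the required $C^0$ bound on $\varphi$ comes from Moser iteration as in \cite{We2}." No such bound is available. In this setting $\varphi$ is defined only through $\ti{\Delta}\varphi=2n-\tr{\ti{g}}{g_\Omega}$ as in \eqref{phidef}; there is no identity $\ti{\omega}=\Omega+\sqrt{-1}\partial\ov{\partial}\varphi$ and no $\partial\ov{\partial}$-Lemma, so the integration by parts that drives Yau's iteration (the step from \eqref{e1} to \eqref{e2}) has no analogue, and in \cite{We2} the bound $\|\varphi\|_{C^0}\le C$ was obtained only under the extra hypothesis that the Nijenhuis tensor is suitably small. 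Indeed, the uniform bound on $\|\varphi\|_{C^0}$ (equivalently on $I_\alpha(\varphi)$, cf.\ Theorem \ref{TWY2}) is precisely the missing ingredient in Conjecture \ref{conjecture1} in general. A sanity check exposes the misallocation in your architecture: the inequality \eqref{key2}, hence your Step 1 estimate \eqref{trggestimate}, holds with \emph{no} curvature assumption at all (this is exactly what \cite{We2}, \cite{TWY} prove), so if your $C^0$ step worked, you would have proved Conjecture \ref{conjecture1} outright and the hypothesis $\cur\ge 0$ would be superfluous. Relatedly, your remark that in the K\"ahler case the bisectional-curvature term "causes no trouble" is wrong for the same reason: that term is $-C_1\tr{\ti{g}}{g}$, and absorbing it is the entire purpose of the $-A\varphi$ in Yau's test function, which is why Step 2 is indispensable there.

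The actual proof uses $\cur(g_\Omega,J)\ge 0$ differently: not to control commutator terms inside the bound \eqref{key2} (which needs no sign hypothesis), but to discard the bad term $-C_1\tr{\ti{g}}{g_\Omega}$ altogether, yielding the stronger inequality \eqref{key3}, $\ti{\Delta}\,\tr{g_\Omega}{\ti{g}}\ge -C_2$, with no logarithm and no potential. One then runs a Moser-type iteration on the function $\tr{g_\Omega}{\ti{g}}$ itself, starting from its $L^1$ bound, to obtain $\sup_M\tr{g_\Omega}{\ti{g}}\le C$ directly --- bypassing $\varphi$ and the $C^0$ problem entirely. (Note also that this route does not even use $\ti{\omega}\in[\Omega]$, as remarked after Theorem \ref{TWY1}.) Your back end --- uniform equivalence of the metrics from the equation, Evans--Krylov plus Schauder bootstrapping, and the observation that nothing here is special to $n=2$ --- is fine and consistent with Steps 3 and 4 of Section \ref{methods}; but as written, your argument reduces the theorem to an estimate that is an open problem rather than a lemma.
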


We note that this gives the first examples of non-K\"ahler manifolds for which Conjecture \ref{conjecture1} holds.  In the case when $M= \mathbb{P}^n$ and $(g_{\textrm{FS}}, J)$ is the Fubini-Study metric, we have
$$\mathcal{R}_{i \ov{j} k \ov{\ell}} ( g_{\textrm{FS}}, J) = (g_{\textrm{FS}})_{i \ov{j}}  (g_{\textrm{FS}})_{k \ov{\ell}} +  (g_{\textrm{FS}})_{i \ov{\ell}}  (g_{\textrm{FS}})_{k \ov{j}},$$
and hence the condition $\mathcal{R} \ge 0$ holds whenever the data $(g_{\Omega}, \Omega)$ is not too far from the Fubini-Study metric.  We note that such results cannot be obtained using Yau's theorem and an implicit function type argument, since we require the estimates to hold for \emph{all} volume forms $\sigma$.

We also remark that the proof of Theorem \ref{TWY1} does not make use of the condition $\tilde{\omega} \in [\Omega]$.  However, this does not contradict the discussion in Section 2 on the necessity of a cohomological assumption, since the nonnegativity of $\mathcal{R}$ must impose restraints on the topology of $M$.

\bigskip
We discuss now the general case in dimension $2n$, with no curvature assumptions.   Suppose we are in the setting of Conjecture \ref{conjecture1}, so that $\Omega$ is a symplectic form taming $J$ while $\ti{\omega} \in [\Omega]$ is a symplectic form compatible with $J$ and satisfying $$\ti{\omega}^n=\sigma.$$
Inspired by the K\"ahler case we define a function $\varphi$ by
\begin{equation}\label{phidef}
\ti{\Delta}\varphi= 2n-\tr{\ti{g}}{g_\Omega},
\end{equation}
together with the normalization $\sup_M\varphi=0$. This definition is well-posed since it easy to see (cf. (3.2) in \cite{TWY}) that
\begin{equation}\label{stokes}
\tr{\ti{g}}{g_\Omega}=2n\frac{\ti{\omega}^{n-1}\wedge\Omega}{\ti{\omega}^n},
\end{equation}
and thus $\tr{\ti{g}}{g_{\Omega}}$ has average $2n$ with respect to $\tilde{\omega}^n$.  Note that if $J$ were integrable, and $\Omega$, $\tilde{\omega}$ K\"ahler with respect to $J$ then $\varphi$ would correspond to the usual K\"ahler potential defined by
$$\tilde{\omega} = \Omega + \sqrt{-1} \partial \overline{\partial} \varphi, \quad \sup_M \varphi=0.$$

We have the following result \cite{TWY}:

\begin{theorem} \label{TWY2} Fix an arbitrary constant $\alpha>0$.  Then, with the notation given above, there are $C^{\infty}$ a priori bounds on $\ti{\omega}$  depending only on $\Omega$, $J$, $\sigma$, $\alpha$ and 
\begin{equation*}
I_{\alpha}(\varphi) :=\int_M e^{-\alpha \varphi} \Omega^n.
\end{equation*}
\end{theorem}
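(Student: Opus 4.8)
The plan is to reduce all higher-order estimates to a zeroth-order estimate on $\varphi$, and then to extract that $C^0$ bound from the integral quantity $I_\alpha(\varphi)$. Recall from \cite{We2} and \cite{TWY} that the full strength of Conjecture \ref{conjecture1} — namely all the $C^k$ bounds in \eqref{Ck} — is already known to follow from a single uniform bound $\|\varphi\|_{C^0}\le C$ via a Moser iteration for the second-order estimate followed by the usual bootstrapping. So the heart of the matter is to bound $\sup_M(-\varphi)=\|\varphi\|_{C^0}$ (we already have $\sup_M\varphi=0$ by normalization) in terms of $\Omega$, $J$, $\sigma$, $\alpha$ and $I_\alpha(\varphi)$.

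First I would establish a differential inequality for $\varphi$. Starting from the definition $\ti{\Delta}\varphi=2n-\tr{\ti{g}}{g_\Omega}$, I expect a computation in a good unitary frame (of the type used to prove Theorem \ref{TWY1} and Theorem \ref{TWY2}) to yield, modulo controllable torsion and curvature terms, a lower bound of the form
\begin{equation*}
\Delta_\Omega\varphi \ge -C_0 + \tr{g_\Omega}{\ti g}\, c_1,
\end{equation*}
where $\Delta_\Omega$ is the (rough or canonical) Laplacian of the fixed metric and $c_1>0$ comes from the arithmetic–geometric mean inequality applied to \eqref{mongeloc}, controlling $\tr{g_\Omega}{\ti g}$ from below by $e^{-F/n}$. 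Crucially $\varphi\le 0$ everywhere, so the positive term is genuinely useful. The upshot is an inequality
$\Delta_\Omega\varphi\ge -C_0$ with $C_0=C_0(\Omega,J,\sigma)$, i.e. $\varphi$ is essentially $\Delta_\Omega$-subharmonic up to a fixed constant.

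Next I would feed this into a De Giorgi–Nash–Moser iteration against the \emph{fixed} background metric $g_\Omega$. The standard local maximum principle for subsolutions gives a bound of the shape
\begin{equation*}
\sup_M(-\varphi)\le C_1\Big(\int_M(-\varphi)\,\Omega^n\Big)+C_2,
\end{equation*}
and the role of $I_\alpha(\varphi)$ is to control this $L^1$-mass: since $e^{-\alpha\varphi}\ge 1+\alpha(-\varphi)$ pointwise, one has $\int_M(-\varphi)\,\Omega^n\le \alpha^{-1}\big(I_\alpha(\varphi)-\mathrm{Vol}_\Omega(M)\big)$, so a bound on $I_\alpha(\varphi)$ yields a bound on $\int_M(-\varphi)\,\Omega^n$ and hence on $\sup_M(-\varphi)$. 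I expect the hypothesis to be phrased via $I_\alpha$ precisely because the natural energy estimate produces exponential integrals of $\varphi$ rather than $L^1$ or $L^2$ norms directly; the arbitrariness of $\alpha$ reflects that any fixed exponential moment suffices.

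The main obstacle is the first step: on an almost-Kähler (not Kähler) manifold, $\ti\omega-\Omega$ is exact but \emph{not} of the form $\sqrt{-1}\partial\dbar\varphi$, so $\varphi$ is only a surrogate potential defined implicitly through \eqref{phidef}, and the clean $\partial\dbar$-calculus is unavailable. The commutator and torsion terms arising when one differentiates $\tr{\ti g}{g_\Omega}$ do not vanish, and one must show they are either absorbable into $C_0$ or dominated by the good positive term $\tr{g_\Omega}{\ti g}$. Controlling these Nijenhuis-tensor contributions — and in particular verifying that the second-order estimate's Moser iteration closes using only the $C^0$ bound on $\varphi$, as asserted in \cite{TWY} — is where the almost-complex structure genuinely complicates the Kähler argument, and is the step I would expect to occupy the bulk of the work.
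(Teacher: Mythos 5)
Your top-level architecture agrees with the paper's (reduce the $C^\infty$ bounds to $\|\varphi\|_{C^0}$, then extract the zeroth-order bound from $I_\alpha(\varphi)$; the first reduction is indeed Step 1 together with Theorem \ref{TWY3}), but the mechanism you propose for the zeroth-order bound has a genuine gap. The differential inequality $\Delta_\Omega\varphi\ge -C_0+c_1\,\tr{g_\Omega}{\ti{g}}$ is not available: $\varphi$ is defined by \eqref{phidef} through the Laplacian of the \emph{unknown} metric, $\ti{\Delta}\varphi=2n-\tr{\ti{g}}{g_\Omega}$, and passing to the fixed Laplacian $\Delta_\Omega$ introduces the difference $(\ti{g}^{-1}-g_\Omega^{-1})$ contracted against the Hessian of $\varphi$, which no choice of unitary frame controls; this is exactly why every maximum-principle argument in \cite{We2}, \cite{TWY} is run with $\ti{\Delta}$, not $\Delta_\Omega$. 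Even in the K\"ahler model, where one does have the identity $\Delta_\Omega\varphi=\tr{g_\Omega}{\ti{g}}-2n$, the inequality that comes for free, $\Delta_\Omega\varphi\ge -2n$, has the \emph{wrong sign} for your second step: the local maximum principle of De Giorgi--Nash--Moser bounds the supremum of \emph{subsolutions}, so to estimate $\sup_M(-\varphi)$ you would need $\Delta_\Omega(-\varphi)\ge -C$, i.e.\ an upper bound $\tr{g_\Omega}{\ti{g}}\le C$ --- which is the second-order estimate you are trying to prove, and which by \eqref{trggestimate} itself depends on $\inf_M\varphi$; the argument is circular. A symptom that something is off: if both of your displayed estimates were correct, the hypothesis on $I_\alpha$ would be superfluous, since $\sup_M\varphi=0$ and $\Delta_\Omega\varphi\ge -C_0$ already give a uniform bound on $\int_M(-\varphi)\,\Omega^n$ via the Green's function of $g_\Omega$, so you would obtain a uniform $C^0$ bound with no integral hypothesis at all --- that is, a proof of Conjecture \ref{conjecture1} (and of Yau's $C^0$ estimate in the integrable case) by linear elliptic theory alone, which is known not to suffice.

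The paper's actual route explains why the hypothesis is an exponential moment rather than a proxy for the $L^1$ norm: the only second-order information available is exponential in $\varphi$. Step 1, i.e.\ the inequality \eqref{key2} and its consequence $\tr{g_\Omega}{\ti{g}}\le Ce^{A(\varphi-\inf_M\varphi)}$, is fed into a Moser-type iteration (the argument of \cite{We1}), which, coupled with the Calabi-Yau equation, yields
\[
-\inf_M\varphi\ \le\ C_\alpha+\log\left(\int_M e^{-\alpha\varphi}\,dV_{g_\Omega}\right)^{1/\alpha}
\]
for every $\alpha>0$. A bound on $I_\alpha(\varphi)$ therefore bounds $-\inf_M\varphi$ directly; Step 1 then bounds $\tr{g_\Omega}{\ti{g}}$, and Steps 3 and 4 (the third-order estimate of \cite{TWY}, or Evans--Krylov as in \cite{We2}, followed by bootstrapping) give the $C^\infty$ bounds. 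So the role of $I_\alpha$ is to close the exponential coupling between the second-order estimate and $\inf_M\varphi$ --- not to control $\int_M(-\varphi)\,\Omega^n$, which is comparatively easy --- and the missing $\partial\ov{\partial}$-Lemma is not the only obstruction to your scheme: even granting a genuine potential, fixed-metric linear theory cannot replace this step.
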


This reduces Conjecture \ref{conjecture1} to establishing a uniform bound on the quantity $I_{\alpha}(\varphi)$ for some sufficiently small $\alpha>0$.  In the setting where $J$ is integrable and $\Omega$, $\tilde{\omega}$ are K\"ahler forms, the quantity $I_{\alpha}(\varphi)$ is always uniformly bounded when $\alpha$ is small, by a very general result which is independent of the Calabi-Yau equation \cite{Ho}, \cite{Ti}.  This gives then in particular an alternative proof of Yau's theorem (also, cf. \cite{We1}).

Finally, we note that, as a consequence of the proof of Theorem \ref{TWY2}, we also have:

\begin{theorem}\label{TWY3} With the notation given above, there are $C^{\infty}$ a priori bounds on $\ti{\omega}$  depending only on $\Omega$, $J$, $\sigma$ and $
\| \tilde{\omega} \|_{C^0(g_{\Omega})}$.
\end{theorem}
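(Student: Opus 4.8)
The plan is to use the hypothesis to force the equation to be uniformly elliptic, and then to run the higher-order estimates already established in the proof of Theorem \ref{TWY2}, omitting entirely the step in which $I_\alpha(\varphi)$ appears. First I would extract a two-sided bound on the metric. Diagonalizing $\ti{g}$ against $g_\Omega$ in a $g_\Omega$-unitary $J$-adapted coframe $\{\theta^i\}$, we may write $\ti{\omega}=\sqrt{-1}\sum_i\lambda_i\,\theta^i\wedge\ov{\theta^i}$ with eigenvalues $\lambda_i>0$. The hypothesis $\|\ti{\omega}\|_{C^0(g_\Omega)}\le C$ bounds each $\lambda_i$ from above, while the Calabi-Yau equation $\ti{\omega}^n=\sigma$ fixes $\prod_i\lambda_i=\det\ti{g}/\det g_\Omega=\sigma/\Omega^n$ between two positive constants, since $\sigma$ and $\Omega^n$ are fixed positive volume forms on the compact manifold $M$. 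Hence each $\lambda_i$ is also bounded below, and we obtain $c\,g_\Omega\le\ti{g}\le C'\,g_\Omega$ with $c,C'>0$ depending only on $\Omega$, $J$, $\sigma$ and $\|\ti{\omega}\|_{C^0(g_\Omega)}$. This is precisely the observation recorded after Conjecture \ref{conjecture1}, now taken as an input rather than produced downstream from a bound on $\varphi$.

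This two-sided bound is exactly the conclusion of the second-order estimate in the proof of Theorem \ref{TWY2}: once it holds, the equation is uniformly elliptic with constants controlled by the stated data. I would therefore invoke the remaining steps of that proof without change---the interior H\"older estimate for the second derivatives of the almost-K\"ahler potential, followed by Schauder estimates and a standard bootstrap---to produce $C^k$ bounds on $\ti{\omega}$ for every $k$, all depending only on $\Omega$, $J$, $\sigma$ and $\|\ti{\omega}\|_{C^0(g_\Omega)}$.

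The hard part is not Step 1, which is linear algebra together with the equation, but the higher-order estimates inherited from Theorem \ref{TWY2}. In the integrable K\"ahler case uniform ellipticity yields $C^{2,\alpha}$ bounds at once, via the Evans-Krylov theorem and the concavity of the Monge-Amp\`ere operator; in the present almost-complex setting the Nijenhuis tensor introduces first-order torsion terms, and the unknown is a closed $(1,1)$-form rather than the Hessian of a global scalar, so a direct appeal to the scalar theory is not available. One must instead work in a carefully chosen local frame (cf. \cite{To1}, \cite{TWY}) to absorb these terms. Since the proof of Theorem \ref{TWY2} already carries out this analysis, the content of the present statement is just that uniform ellipticity---the single ingredient for which $I_\alpha(\varphi)$ was needed---is here provided directly by the hypothesis.
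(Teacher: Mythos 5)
Your proposal is correct and takes essentially the same route as the paper, which obtains Theorem \ref{TWY3} precisely as a byproduct of the proof of Theorem \ref{TWY2}: the $C^0$ bound on $\ti{\omega}$ together with the Calabi-Yau equation gives uniform equivalence of $\ti{g}$ and $g_\Omega$, after which the third-order estimate of \cite{TWY} (or the Evans-Krylov-type argument of \cite{We2} in the compatible case) and the standard elliptic bootstrap of Step 4 in Section \ref{methods} yield all higher $C^k$ bounds. Your observation that the quantity $I_\alpha(\varphi)$ was needed only to supply this uniform ellipticity, here assumed directly, is exactly the paper's point.
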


That is, the $C^{\infty}$ bounds on $\tilde{\omega}$ for Conjecture \ref{conjecture1} follow from a $C^0$ bound on $\tilde{\omega}$.  In fact, the result of Theorem \ref{TWY3} is  already contained in \cite{We2} in the special case when $\Omega$ is compatible with $J$.  We also mention that Donaldson \cite{D} proved a related result that a $C^0$ bound on $\tilde{\omega}$ together with a BMO type estimate on $\tilde{\omega}$  is enough to give Conjecture \ref{conjecture1}.

\setcounter{equation}{0}
\section{Methods} \label{methods}

In this section we will briefly outline the key estimates of Yau (Theorem \ref{yauestimates}) and describe, informally, those arguments and estimates that still hold in the setting of Donaldson's conjecture.

Let $\tilde{\omega}$ solve the Calabi-Yau equation 
$$\tilde{\omega}^n = e^F \omega^n,$$
on a compact K\"ahler manifold $M$, for some smooth function $F$.
 Let $\varphi$ be the K\"ahler potential, defined by
$$\ti{\omega} = \omega + \sqrt{-1} \partial \ov{\partial} \varphi, \quad \int_M \varphi \, \omega^n=0,$$
The key steps in proving $C^{\infty}$ \emph{a priori} bounds on $\tilde{\omega}$ are as follows. 

\bigskip
\noindent\emph{Step 1.} \  The inequality
\begin{equation} \label{trggestimate}
\tr{g}{\ti{g}} \le C e^{A(\varphi - \inf_M \varphi)},
\end{equation}
holds for uniform constants $A, C$.

\bigskip
\noindent
\emph{Step 2.} \ The K\"ahler potential $\varphi$  satisfies $\| \varphi \|_{C^0} \le C$, for a uniform $C$.

\bigskip
\noindent
\emph{Step 3.} \ If $\| \tilde{\omega} \|_{C^0}$ is uniformly bounded, we have 
$\| \tilde{\omega} \|_{C^1(g)} \le C$, for a uniform $C$.

\bigskip
\noindent
\emph{Step 4.} Given a H\"older bound $\| \tilde{\omega} \|_{C^{\beta}(g)} \le C$ for some $\beta>0$, we have, for each $k=2, 3, \ldots,$, the estimates 
$\| \tilde{\omega} \|_{C^k(g)} \le A_k,$
for uniform $A_k$.

\bigskip

The proof of Step 1 uses the maximum principle and the key inequality:
\begin{equation} \label{key}
\tilde{\Delta} \log \tr{g}{\tilde{g}} \ge -C_1 \tr{\ti{g}}{g} -C_2,
\end{equation}
for uniform constants $C_1$ and $C_2$.  Observe that applying the Laplace operator of $\tilde{g}$ to the quantity $\tr{g}{\ti{g}}$ gives rise to three terms.  Ignoring first order derivatives for the moment, one sees that 
two derivatives landing on $\tilde{g}$ give a term involving the Ricci curvature of $\tilde{g}$, which can be controlled using the Calabi-Yau equation.  When two derivatives land on $g$ this gives the full curvature tensor of $g$, and the resulting term can be bounded by $(\tr{g}{\tilde{g}})(\tr{\tilde{g}}{g})$.  Finally, the first order derivatives give rise to a positive quantity
\begin{equation} \label{goodterm}
g^{i \ov{j}} \ti{g}^{p \ov{q}} \ti{g}^{k \ov{\ell}} \nabla_i \ti{g}_{k \ov{q}} \nabla_{\ov{j}} \ti{g}_{p \ov{\ell}},
\end{equation}
which can be used to control the negative term 
$-(| d \tr{g}{\tilde{g}} |^2_{\ti{g}})/ (\tr{g}{\ti{g}})^2$ produced from  differentiating the logarithm function.

Once (\ref{key}) is established, the estimate (\ref{trggestimate}) follows immediately from the maximum principle applied to the quantity $(\log \tr{g}{\tilde{g}}- A \varphi)$ for a constant $A$ chosen sufficiently large.  The point is that the K\"ahler potential $\varphi$ satisfies the equation
$$\tilde{\Delta} \varphi = 2n - \tr{\ti{g}}{g},$$
and so by choosing $A$ larger than $C_1$,  the bad term $-C_1 \tr{\ti{g}}{g}$ in (\ref{key}) can be replaced by a good positive term. Then using the Calabi-Yau equation one sees that the quantities $\tr{\ti{g}}{g}$ and $\tr{g}{\tilde{g}}$ are basically equivalent.

Note that the same inequality (\ref{trggestimate}) holds for other equations in K\"ahler geometry such as the equation for K\"ahler-Einstein metrics with negative Ricci curvature \cite{Ya}, \cite{Au}.

Step 2 was achieved using the celebrated Moser iteration method of Yau.  We illustrate the basic idea by describing how to obtain a uniform $L^2$ estimate of $\varphi$.  By the Calabi-Yau equation,  
\begin{equation} \label{e1}
\int_M \varphi (\omega^n - \ti{\omega}^n) \le C \int_M | \varphi | \omega^n. 
\end{equation}
On the other hand, since $\tilde{\omega} = \omega + \sqrt{-1} \partial \ov{\partial} \varphi$, 
\begin{eqnarray} \nonumber
  \int_M \varphi (\omega^n - \ti{\omega}^n) & = & - \int_M \varphi \sqrt{-1} \partial \ov{\partial} \varphi \wedge \sum_{i=0}^{n-1}(\omega^i\wedge\ti{\omega}^{n-1-i}) \\
& \ge &  \int_M \sqrt{-1} \partial \varphi \wedge \ov{\partial} \varphi \wedge \omega^{n-1}, \label{e2}
\end{eqnarray}
after integrating by parts. Combining (\ref{e1}) and (\ref{e2}) with the Poincar\'e inequality 
$$\int_M | \varphi|^2 \omega^n \le C \int_M | \nabla \varphi|^2 \omega^n,$$
gives $\| \varphi \|_{L^2(\omega)} \le C$.  This idea can then be extended by an iteration process to give $L^p$ estimates $\| \varphi \|_{L^p} \le C(p)$ by using the quantity $\varphi | \varphi |^a$ for $a > 0$ in the above calculation instead of $\varphi$ and applying the Sobolev inequality instead of the Poincar\'e inequality.  
 The $C^0$ bound of $\varphi$ then follows after checking that the constants $C(p)$ remain bounded as $p \rightarrow \infty$.  

Note that this method differs somewhat from Yau's original proof, which was rather more involved and made use of Step 1.  Alternative proofs of Step 2 have been given by Kolodziej \cite{Ko}, Blocki \cite{Bl} and 
also by the second author, where it was shown in \cite{We1} that (\ref{trggestimate}) implies a uniform estimate of the potential $\varphi$. 

Observe that once Steps 1 and 2 have been established, the Calabi-Yau equation implies immediately that the metrics $g$ and $\tilde{g}$ are uniformly equivalent.   
Step 3 then follows from a maximum principle argument applied to the third order derivatives of $\varphi$.  This computation was inspired by an estimate of Calabi \cite{Ca2}.  The idea is to compute the Laplace operator of $\ti{g}$ applied to a quantity $S$, the norm-squared of the tensor $\nabla_i \nabla_{\ov{j}} \nabla_k \varphi$, with the norm taken with respect to $\tilde{g}$.  A lengthy calculation gives:
$$\tilde{\Delta} S \ge - C_1 S - C_2.$$
One can then apply the maximum principle to $(S + A\, \tr{g}{\ti{g}})$ for a sufficiently large constant $A$, making  use of the fact that $\tilde{\Delta} \tr{g}{\ti{g}}$ contains the  positive term (\ref{goodterm}) which is equivalent to $S$.  This then gives the bound for $S$ as  required in Step 3.   Step 4  follows from standard elliptic estimates after differentiating the Calabi-Yau equation.  This completes the outline of Yau's estimates.

We now discuss how these estimates can be extended in the non-integrable case.  Assume that we are in the setting of Conjecture \ref{conjecture1}, so that $M$ is a compact 4-manifold equipped with an almost complex structure $J$ and $\Omega$ is a symplectic form taming $J$.  We have a symplectic form $\tilde{\omega} \in [\Omega]$ compatible with $J$ and satisfying the Calabi-Yau equation $\tilde{\omega}^2 = \sigma$ for some volume form $\sigma$ (in fact, much of what we say here carries over easily to any dimension).  

  It turns out that Step 1 holds:  we have the estimate
\begin{equation} \label{key2}
\tilde{\Delta} \log \tr{g_{\Omega}}{\tilde{g}} \ge -C_1 \tr{\ti{g}}{g_{\Omega}} -C_2.
\end{equation}
This was first proved in \cite{We2} in the case when $\Omega$ is compatible with $J$, using normal coordinates and careful estimates of the terms involving the Nijenhuis tensor.  In \cite{TWY} it was shown that (\ref{key2}) holds even if $\Omega$ only tames $J$.  The method of \cite{TWY}, simplifying the arguments in \cite{We2}, was to use the method of moving frames and the canonical connection, as described in Section \ref{donconj}.  From (\ref{key2}), the analogue of (\ref{trggestimate}) then follows immediately with the potential $\varphi$ defined by (\ref{phidef}).  

Moreover, it was shown in \cite{TWY} that under the assumption $\mathcal{R}(g_{\Omega}, J) \ge 0$ discussed in Section \ref{estimates}, we have the stronger inequality
\begin{equation} \label{key3}
\tilde{\Delta}  \tr{g_{\Omega}}{\tilde{g}} \ge  -C_2.
\end{equation}
Then (\ref{key3}) together with the Calabi-Yau equation and an iteration argument, starting with the  $L^1$ estimate on $\tr{g_{\Omega}}{\ti{g}}$, gives a uniform upper bound on the quantity $\tr{g_{\Omega}}{\ti{g}}$.

Step 2 cannot be carried out in the same way as in Yau's theorem due to the lack of a $\partial \ov{\partial}$-Lemma.  This seems to be the missing ingredient in a direct proof of Conjecture \ref{conjecture1} along these lines.  

For Step 3, it was shown in \cite{TWY} that the analogue of the third order estimate does indeed hold in this setting, although the computation is significantly more involved.  An alternative approach to Step 3 was carried out in \cite{We2}, in the case when $\Omega$ is compatible with $J$, using the method of Evans and Krylov \cite{Ev}, \cite{Kr} (see also \cite{Tr}).   This argument exploits the concavity of the $\log \det$ function and gives a H\"older bound on $\tilde{\omega}$.  While this is weaker than the estimate $\| \tilde{\omega} \|_{C^1(g_{\Omega})}$ obtained by the maximum principle, it is sufficient for the purpose of obtaining higher order estimates.

Step 4 follows from standard elliptic theory as in the K\"ahler case, and is discussed in \cite{D}, \cite{We2}, \cite{TWY}.

Returning to Step 2:  a Moser type iteration argument making use of Step 1 gives instead an estimate
$$- \inf_M \varphi \le C_{\alpha} + \log \left( \int_M e^{-\alpha \varphi} dV_{g_{\Omega}} \right)^{1/\alpha}$$
for any strictly positive $\alpha>0$.   Combining this result with Steps 1, 2 and 4 gives the proof of Theorem \ref{TWY2}.

\pagebreak[3]
\setcounter{equation}{0}
\section{A monotonicity formula} \label{mono}
In this section we will describe how a monotonicity formula for harmonic maps can be used to give a local estimate for solutions to the Calabi-Yau equation.

In general if $(M,J)$ is an 
almost complex manifold and $g$ is a Riemannian metric which satisfies
$g(X,Y)=g(JX,JY)$ for all $X,Y$, then we can define a real $2$-form $\omega$, not necessarily closed, 
by setting 
\begin{equation}\label{11}
\omega(X,Y)=g(JX,Y). 
\end{equation}
In this case we call the data $(M,g,\omega,J)$ an almost-Hermitian manifold. 
Let us recall briefly the notion of a harmonic map. If $f:(M,g)\to (M',g')$
is a mapping between Riemannian manifolds, its differential $df$ can be viewed as a section of $T^*M\otimes f^*TM'$. This bundle has a natural connection induced from the Levi-Civita connections of $g$ and $g'$, and we define the Laplacian of the map $f$ to be
$\Delta f=\tr{g}{(\nabla df)},$ which is a section of $f^*TM'$. If we pick local coordinates $\{x^\alpha\}$ on $M$ 
and $\{y^i\}$ on $M'$ then, writing $f$ in components $\{f^i\}$, we have
\begin{equation}\label{maplap}
(\Delta f)^i=g^{\alpha\beta}\frac{\de^2 f^i}{\de x^\alpha\de x^\beta}-
g^{\alpha\beta}\Gamma^\gamma_{\alpha\beta}\frac{\de f^i}{\de x^\gamma}+
g^{\alpha\beta}\Gamma'^i_{jk}\frac{\de f^j}{\de x^\alpha}\frac{\de f^k}{\de x^\beta},
\end{equation}
where $\Gamma^\gamma_{\alpha\beta}$ and $\Gamma'^i_{jk}$ are the Christoffel symbols of $g$ and $g'$ respectively. A map $f$ is called \emph{harmonic} if $\Delta f=0$.
We have the following result of Lichnerowicz \cite{Li}.

\pagebreak[3]
\begin{theorem}\label{harmo}
Let $(M,g,\omega,J)$ and $(M',g',\omega',J')$ be two almost-Hermitian manifolds of real dimension $2n$ and $2n'$ respectively, 
and $f:M\to M'$ be a $(J,J')$-holomorphic map, that is a map that satisfies
$$df\circ J=J'\circ df.$$
If 
\begin{equation}\label{conditions}
d(\omega^{n-1})=(d\omega')^{2,1}=0,
\end{equation}
then $f$ is harmonic.
\end{theorem}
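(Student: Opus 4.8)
The plan is to establish harmonicity through the first variation of the energy, using a calibration-type identity in which the two hypotheses play visibly separate roles; I take $M$ compact (harmonicity being local, one localizes or falls back on the pointwise computation mentioned below). \textbf{Step 1 (Calibration inequality).} For an arbitrary smooth map $h\colon M\to M'$, decompose $dh=\partial h+\overline{\partial}h$ into its $(J,J')$-linear and $(J,J')$-antilinear parts. A pointwise algebraic computation on the almost-Hermitian fibres gives
$$\tr{\omega}{h^*\omega'}=|\partial h|^2-|\overline{\partial}h|^2,\qquad |dh|^2=|\partial h|^2+|\overline{\partial}h|^2,$$
so that, writing $\Phi(h)=\int_M h^*\omega'\wedge\frac{\omega^{n-1}}{(n-1)!}=\int_M(\tr{\omega}{h^*\omega'})\,dV_g$ and $E(h)=\frac12\int_M|dh|^2\,dV_g$, and using the standard identity $\alpha\wedge\frac{\omega^{n-1}}{(n-1)!}=(\tr{\omega}{\alpha})\frac{\omega^n}{n!}$ for a $2$-form $\alpha$, one obtains
$$\Phi(h)\le 2E(h),\quad\text{with equality iff }\overline{\partial}h=0,\ \text{i.e.\ }h\text{ is }(J,J')\text{-holomorphic}.$$
In particular the given holomorphic $f$ satisfies $\Phi(f)=2E(f)$.

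\textbf{Step 2 (First variation of $\Phi$ vanishes).} Let $f_t$ be a variation of $f=f_0$ with variation field $V$. The formula for the variation of the pullback of the fixed form $\omega'$ reads $\frac{d}{dt}f_t^*\omega'=f_t^*(\iota_V d\omega')+d(f_t^*\iota_V\omega')$, whence
$$\delta\Phi(V)=\int_M f^*(\iota_V d\omega')\wedge\frac{\omega^{n-1}}{(n-1)!}+\int_M d\!\left(f^*\iota_V\omega'\right)\wedge\frac{\omega^{n-1}}{(n-1)!}.$$
In the second integral, Stokes' theorem together with the domain hypothesis $d(\omega^{n-1})=0$ makes the term vanish; this is exactly where $d(\omega^{n-1})=0$ enters. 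For the first integral, the target hypothesis $(d\omega')^{2,1}=0$, together with its conjugate $(d\omega')^{1,2}=0$, forces $d\omega'$ to be of type $(3,0)+(0,3)$, so $\iota_V d\omega'$ is of type $(2,0)+(0,2)$; since $f$ is holomorphic, $f^*(\iota_V d\omega')$ remains of type $(2,0)+(0,2)$, and wedging with the $(n-1,n-1)$-form $\omega^{n-1}$ yields a form of type $(n+1,n-1)$ or $(n-1,n+1)$, both vanishing on the complex $n$-dimensional $M$. Hence $\delta\Phi(V)=0$ for every $V$.

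\textbf{Step 3 (Conclusion).} I would then consider $\psi(t):=2E(f_t)-\Phi(f_t)$. By Step 1, $\psi(t)\ge 0$ for all $t$, while $\psi(0)=0$ since $f$ is holomorphic; thus $t=0$ is a minimum and $\psi'(0)=0$. Combined with $\Phi'(0)=\delta\Phi(V)=0$ from Step 2, this gives $\frac{d}{dt}E(f_t)\big|_{t=0}=0$. Since $\frac{d}{dt}E(f_t)\big|_{0}=-\int_M\langle\tau(f),V\rangle\,dV_g$ and $V$ is arbitrary, the tension field $\tau(f)=\tr{g}{(\nabla df)}$ vanishes, i.e.\ $f$ is harmonic.

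\textbf{Main obstacle.} The crux is to correctly pair each hypothesis with the term it disposes of: $d(\omega^{n-1})=0$ is precisely what discards the exact contribution to $\delta\Phi$ after Stokes, while $(d\omega')^{2,1}=0$ is precisely what annihilates the remaining interior term by a bidegree count — the latter relying on the fact that pullback under a holomorphic map preserves type. The delicate points will be verifying the two pointwise algebraic identities of Step 1 in the genuinely \emph{non-integrable} setting, where $J,J'$ are only almost complex, and checking the variation-of-pullback formula. An alternative that avoids any compactness assumption is to compute $\tau(f)=\tr{g}{(\nabla df)}$ directly in a unitary coframe: holomorphicity kills most components of the Hessian, and the surviving traces reorganize into exactly the codifferential $\delta\omega$ (equivalently $d(\omega^{n-1})$) on $M$ and the $(2,1)$-part of $d\omega'$ on $M'$; I would invoke this pointwise computation if the fully local statement is required.
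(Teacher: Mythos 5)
Your proposal is correct, and it follows essentially the argument behind the paper's proof: the paper does not prove Theorem \ref{harmo} itself but cites Lichnerowicz \cite{Li}, whose classical energy-decomposition route is precisely yours — split $df$ into $(J,J')$-linear and antilinear parts, note that a holomorphic $f$ saturates the calibration inequality between $2E$ and $\int_M f^*\omega'\wedge\omega^{n-1}/(n-1)!$, and show the first variation of the latter vanishes, with $d(\omega^{n-1})=0$ disposing of the exact term via Stokes and $(d\omega')^{2,1}=0$ (hence $d\omega'\in\Lambda^{3,0}\oplus\Lambda^{0,3}$) disposing of the interior-product term by the bidegree count, so that $\delta E(V)=0$ for all $V$ and $\tau(f)=0$. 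Your pairing of each hypothesis with the term it eliminates, and the localization to compactly supported variations when $M$ is not compact, both match the standard treatment (cf.\ \cite{EL}, \S 9).
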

Notice that \eqref{conditions} is satisfied if $\omega$ and $\omega'$ are closed (the converse is also true if $n=2$), but the theorem fails for general almost-Hermitian manifolds that do not satisfy the assumption \eqref{conditions} (see (9.11) in \cite{EL}). As an aside, we note here that harmonic maps between Riemannian manifolds satisfy a Schwarz lemma \cite{GH} and so do holomorphic maps between K\"ahler manifolds \cite{Ya2}. For a general Schwarz lemma on holomorphic maps between almost-Hermitian manifolds, see \cite{To1}.

We now consider 
 the setting of Conjecture \ref{conjecture1} (in any dimension $2n$) and derive an equation for the Laplacian of the identity map from $M$ to itself with respect to two different metrics on $M$.
Note that the symplectic form $\Omega$ is only taming $J$ and so the $2$-form associated to $g_\Omega$ and $J$ by \eqref{11} is not $\Omega$ but rather its $(1,1)$-part $\widehat{\Omega}$.  For convenience, we will from now on denote $g_{\Omega}$ by $g$.

We would like to apply Theorem \ref{harmo} to the identity map $I:(M,g,\widehat{\Omega},J)\to(M,\ti{g},\ti{\omega},J)$, but because $\widehat{\Omega}^{n-1}$ is not closed in general we cannot do this directly.
However, $\ti{\omega}$ is compatible with $J$ and Lichnerowicz's proof of Theorem \ref{harmo} \cite{Li} shows that the last term on the right hand side of \eqref{maplap} is independent of $\tilde{g}$. Taking $f=I$, we see that $\Delta I$ can be uniformly bounded by quantities depending only on the metric $g$.

We will use this to derive a monotonicity formula, analogous to that of Price \cite{P}.
Let $\xi$ be any smooth vector field on $M$ and $u$ be any smooth map from $M$ to itself. We recall that the energy density of $u$ is the quantity
$$e(u)=\tr{g}{(u^*\ti{g})}=g^{ij}\frac{\de u^k}{\de x^i}\frac{\de u^\ell}{\de x^j}\ti{g}_{kl}.$$
Then integration by parts gives
\begin{equation}\label{stat}
\frac{1}{2}\int_M \mathrm{div}\xi\ e(u)dV_g-\int_M g^{ij}\xi^k_{,i}
\frac{\de u^\ell}{\de x^k}\frac{\de u^p}{\de x^j}\ti{g}_{\ell p}dV_g=
\int_M (\Delta u)^i \xi^j \frac{\de u^k}{\de x^j}\ti{g}_{ik} dV_g.
\end{equation}
Whenever $u$ is harmonic, the right hand side of \eqref{stat} vanishes and the equation says that $u$ is a critical point of the Dirichlet integral when we reparametrize the domain $M$ by diffeomorphisms. Such maps satisfy a monotonicity formula \cite{P}. 
If we now consider the case when $u$ is the identity map, we see that it is not necessarily harmonic, but the right hand side of \eqref{stat} is given by
\begin{equation}\label{errorterm}
\int_M (\Delta I)^i \xi^k \ti{g}_{ik} dV_g,
\end{equation}
with $(\Delta I)^i$ uniformly bounded in terms of $g$.
The energy density of $I$ is  $\tr{g}{\ti{g}}$. The monotonicity formula of Price then does not apply directly, but we can trace through its proof (we will follow the proof of Theorem 1 in \cite{GB}) to obtain the following result. 

\begin{theorem} Let $(M,\Omega)$ be a compact $2n$-dimensional symplectic manifold, $J$ an almost complex structure tamed by $\Omega$ and $\ti{\omega}$ another symplectic form compatible with $J$.
We also let $g$ and $\ti{g}$ be the associated Riemannian metrics. Then there exist constants $r_0,A>0$ that depend only on $(M,g)$ such that given any $p\in M$ and any $0<r<\rho<r_0$ we have
\begin{equation}\label{monot}
\frac{e^{Ar}}{r^{2n-2}}\int_{B_g(p,r)}  \emph{tr}_g{\ti{g}} \, dV_g\leq
\frac{e^{A\rho}}{\rho^{2n-2}}\int_{B_g(p,\rho)} \emph{tr}_{g}{\ti{g}} \,dV_g,
\end{equation}
where $B_g(p,r)$ denotes the geodesic ball in the metric $g$ centered at $p$ of radius $r$.
\end{theorem}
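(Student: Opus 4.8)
The plan is to run Price's inner-variation argument on the identity map $I:(M,g)\to(M,\ti{g})$, carrying the non-harmonicity error (\ref{errorterm}) through the computation and showing that it is controlled by the energy density $e(I)=\tr{g}{\ti{g}}$ itself, so that it only contributes to the exponential factor $e^{Ar}$. Fix $p\in M$ and work in geodesic normal coordinates $\{x^i\}$ centered at $p$, so that $g_{ij}=\delta_{ij}+O(r^2)$, $\Gamma^k_{ij}=O(r)$ and $\sqrt{\det g}=1+O(r^2)$, with $r=|x|$. Into (\ref{stat}) with $u=I$ I would substitute the radial test field $\xi^k=\phi(r)\,x^k$, where $\phi$ is a smooth nonincreasing cutoff approximating the characteristic function $\chi_{[0,\rho]}(r)$; later I let $\phi\to\chi_{[0,\rho]}$ and invoke the coarea formula.

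A direct computation gives $\mathrm{div}\,\xi=2n\,\phi+r\phi'+O(r^2\phi)$ and $g^{ij}\xi^k_{,i}\ti{g}_{kj}=\phi\,\tr{g}{\ti{g}}+r\phi'\,\ti{g}(\de_r,\de_r)+O(r)$ in these coordinates, where $\de_r$ is the radial field. Substituting into (\ref{stat}) and combining, the left-hand side becomes $(n-1)\int\phi\,e(I)\,dV_g+\tfrac12\int r\phi'\,e(I)\,dV_g-\int r\phi'\,\ti{g}(\de_r,\de_r)\,dV_g$ plus curvature errors. Writing $E(\rho)=\int_{B_g(p,\rho)}\tr{g}{\ti{g}}\,dV_g$ and letting $\phi\to\chi_{[0,\rho]}$, the $\phi'$-terms turn into boundary integrals over $\partial B_g(p,\rho)$: the term carrying $\ti{g}(\de_r,\de_r)$ is nonnegative and is discarded, while the others give $-\tfrac{\rho}{2}E'(\rho)$. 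Since the right-hand side of (\ref{stat}) is exactly the error (\ref{errorterm}), this yields a differential inequality of the form
\[
E'(\rho)-\frac{2n-2}{\rho}\,E(\rho)\ \ge\ -\frac{2}{\rho}\,|\text{(error \eqref{errorterm})}|-\frac{C}{\rho}\,(\text{curvature errors}).
\]

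The crux, and the only genuinely new point compared with the harmonic case, is to estimate the error term (\ref{errorterm}). Here I would use that $\Delta I$ is bounded purely in terms of $g$ (as established in the text, since $\ti{\omega}$ is compatible with $J$ and the offending term in (\ref{maplap}) is $\ti{g}$-independent), so that $|\Delta I|_g\le C(g)$. Combining this with the elementary eigenvalue bound $\ti{g}(V,V)\le(\tr{g}{\ti{g}})\,|V|_g^2$, applied both to $V=\Delta I$ and to the position field $V=x$, I obtain the pointwise estimate $|(\Delta I)^i\,\ti{g}_{ik}\,\xi^k|\le C\,\phi\,r\,\tr{g}{\ti{g}}$; integrating over $B_g(p,\rho)$ then gives $|\text{(error \eqref{errorterm})}|\le C\rho\,E(\rho)$. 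The curvature errors, each of the form $O(r)\,e(I)$ or $O(r^2)\,e(I)$, are bounded the same way, and the boundary curvature errors produced by $\phi'$ are of relative size $O(\rho^2)$ and hence, for $\rho<r_0$ with $r_0$ small (depending only on the injectivity radius and curvature of $g$), can be absorbed into the leading terms. I expect this absorption of the non-harmonicity error into the energy, rather than the normal-coordinate bookkeeping, to be the main obstacle: it is what forces the exponential correction and relies essentially on the $\ti{g}$-independence of $\Delta I$.

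Feeding these bounds back in yields $\frac{d}{d\rho}\big(\rho^{2-2n}E(\rho)\big)\ge -A\,\rho^{2-2n}E(\rho)$ for all $\rho<r_0$, with $A$ depending only on $(M,g)$. Hence $\frac{d}{d\rho}\big(e^{A\rho}\rho^{2-2n}E(\rho)\big)\ge 0$, and integrating this from $r$ to $\rho$ for $0<r<\rho<r_0$ gives precisely the inequality (\ref{monot}).
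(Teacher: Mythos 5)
Your proposal is correct and takes essentially the same approach as the paper: both run the Price/Gro\ss e-Brauckmann stationarity argument by inserting the cutoff radial field into \eqref{stat}, and both handle the single new ingredient --- the non-harmonicity term \eqref{errorterm} --- via the $\ti{g}$-independent bound on $\Delta I$ and the pointwise estimate $|(\Delta I)^i\ti{g}_{ik}\xi^k|\le C\phi\, r\,\tr{g}{\ti{g}}$, which matches the paper's bound $C\int\eta\, r\,\tr{g}{\ti{g}}\,dV_g$ and is absorbed into the exponential factor $e^{A\rho}$. The only difference is presentational: you carry out explicitly the normal-coordinate bookkeeping and the derivation of the differential inequality for $E(\rho)$ that the paper delegates to the proof of Theorem 1 in \cite{GB}.
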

The reason why this holds is the following. Using partitions of unity we can assume that the domain is a ball in $\mathbb{R}^{2n}$ and the metric $g$ is close to being Euclidean. With the notation of \cite{GB},  we take $\xi$ to be the radial vector field multiplied by a cutoff function $\eta$. We substitute this into \eqref{stat} and make use of \eqref{errorterm}. Comparing with the proof of Theorem 1 in \cite{GB} the only new term that appears is the quantity \eqref{errorterm} which can be bounded by
$$C\int_{B_{(1+s)r}}\eta r \tr{g}{\ti{g}} \, dV_g.$$
and this can be absorbed into another term of the same kind in \cite{GB}.  This proves \eqref{monot}.

We now restrict to the $4$-dimensional case $n=2$ and we assume that $\ti{\omega}$ is cohomologous to $\Omega$ and satisfies the Calabi-Yau equation \eqref{CY1}. Then using  \eqref{CY1} we see that 
\begin{equation}\label{equiv}
C^{-1}\tr{g}{\ti{g}}\leq \tr{\ti{g}}{g}\leq C\tr{g}{\ti{g}},
\end{equation}
for a uniform constant $C$. Moreover \eqref{stokes} and Stokes' Theorem imply that the $L^1$ norm of $\tr{\ti{g}}{g}$ is uniformly bounded
\begin{equation}\label{l1bound}
\int_M \tr{\ti{g}}{g} \, dV_g\leq C\int_M \tr{\ti{g}}{g}\ \Omega^2
\leq C\int_M \ti{\omega}\wedge\Omega=C\int_M\Omega^2\leq C,
\end{equation}
and so \eqref{equiv}, \eqref{l1bound} together with the monotonicity formula \eqref{monot} give the following corollary.

\begin{corollary}
Using the notation as above, with $\tilde{g}$ solving the Calabi-Yau equation, there exists a uniform constant $C$ such that
\begin{equation}\label{decay}
\int_{B_g(p,r)}\emph{tr}_{g}{\ti{g}} \, dV_g\leq C r^2,
\end{equation}
for all $p\in M$ and $r>0$ small.
\end{corollary}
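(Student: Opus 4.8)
The plan is to extract the decay estimate (\ref{decay}) directly from the monotonicity formula (\ref{monot}) by freezing the outer radius and bounding the resulting integral globally. The key observation is that in the present four-dimensional setting $n=2$, the exponent $2n-2$ appearing in (\ref{monot}) equals $2$, which is exactly the power of $r$ demanded by the conclusion (\ref{decay}), so no rescaling or dimensional bookkeeping is needed beyond applying the formula as stated.

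First I would fix a constant radius $\rho$ with $0<\rho<r_0$, say $\rho=r_0/2$, where $r_0$ and $A$ are the constants from the monotonicity formula depending only on $(M,g)$. For any point $p\in M$ and any small $r<\rho$, applying (\ref{monot}) and then enlarging the domain of integration gives
$$\int_{B_g(p,r)} \tr{g}{\ti{g}} \, dV_g \leq \frac{e^{A\rho}}{\rho^{2}} \, r^2 e^{-Ar} \int_{B_g(p,\rho)} \tr{g}{\ti{g}} \, dV_g \leq \frac{e^{A\rho}}{\rho^{2}} \, r^2 \int_M \tr{g}{\ti{g}} \, dV_g,$$
where in the last step I used $e^{-Ar}\leq 1$ together with $B_g(p,\rho)\subset M$. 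Thus everything is reduced to a uniform bound on the global integral $\int_M \tr{g}{\ti{g}} \, dV_g$, independent of $p$.

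That remaining global bound is precisely where the four-dimensional Calabi-Yau equation is used. By the equivalence of the two traces (\ref{equiv}) we have the pointwise inequality $\tr{g}{\ti{g}} \leq C \, \tr{\ti{g}}{g}$, and then the $L^1$ estimate (\ref{l1bound}) yields $\int_M \tr{\ti{g}}{g} \, dV_g \leq C$. Combining these gives $\int_M \tr{g}{\ti{g}} \, dV_g \leq C$, and substituting into the displayed inequality produces $\int_{B_g(p,r)} \tr{g}{\ti{g}} \, dV_g \leq C' r^2$ with $C'=Ce^{A\rho}/\rho^2$, uniform in $p$, as required.

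I do not expect a serious obstacle here: once the monotonicity formula and the uniform $L^1$ estimate are both in hand, the corollary is a routine packaging of the two. The only points that merit a moment's care are that the constants $r_0$ and $A$ depend only on the fixed background $(M,g)$---which is already asserted in the statement of the monotonicity theorem---and that the global bound on $\int_M \tr{g}{\ti{g}} \, dV_g$ is genuinely uniform over all base points $p$, which holds automatically since it is a single integral over the compact manifold $M$. All the essential analytic content was already spent in deriving (\ref{monot}), (\ref{equiv}) and (\ref{l1bound}); this final step simply assembles them.
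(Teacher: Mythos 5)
Your proposal is correct and follows essentially the same route as the paper, which obtains the corollary in exactly this way: fix the outer radius in the monotonicity formula \eqref{monot}, enlarge the ball $B_g(p,\rho)$ to all of $M$, and bound the resulting global integral of $\mathrm{tr}_g{\ti{g}}$ by combining the trace equivalence \eqref{equiv} with the $L^1$ estimate \eqref{l1bound}. Your observation that $2n-2=2$ when $n=2$ supplies precisely the power of $r$ in \eqref{decay}, and the constants behave as you say.
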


By analogy with the theory of harmonic maps we expect the following $\ve$-regularity result:
\begin{conjecture}\label{epsilo}
Let $(M,\Omega)$ be a compact symplectic four-manifold equipped with an almost complex structure $J$ tamed by $\Omega$. Let $\ti{\omega}$ be another symplectic form cohomologous to $\Omega$ and compatible with $J$. Given a smooth volume form $\sigma$, we assume that $\ti{\omega}$ satisfies the Calabi-Yau equation
$$\ti{\omega}^2=\sigma.$$
Then there exist constants $\ve, C, r_0>0$ that depend only on $\Omega$, $J$ and $\sigma$ such that if $$\frac{1}{r^2}\int_{B_g(p,r)}\emph{tr}_{g}{\ti{g}} \,dV_g\leq\ve,$$
for some $p\in M$ and some $0<r<r_0$, then 
$$\sup_{B_g(p,r/2)}\emph{tr}_{g}{\ti{g}}\leq \frac{C}{r^4}\int_{B_g(p,r)}\emph{tr}_{g}{\ti{g}}\, dV_g.$$
\end{conjecture}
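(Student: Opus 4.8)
The plan is to prove this exactly as one proves $\ve$-regularity for (almost) harmonic maps, adapting the monotonicity-based argument of Price and Schoen (the proof in \cite{GB} that we already followed to obtain \eqref{monot}) to the almost-K\"ahler Calabi--Yau setting. The two analytic ingredients one wants are the monotonicity formula \eqref{monot}, equivalently the density decay \eqref{decay}, which is already in hand, together with a Bochner-type subsolution inequality for the energy density. Throughout I would exploit two features special to real dimension four: the Calabi--Yau equation \eqref{CY1} forces $\det\ti{g}=e^{F}\det g$, so that $dV_{\ti{g}}$ and $dV_{g}$ are uniformly comparable, and the trace equivalence \eqref{equiv} makes $\tr{g}{\ti{g}}$ and $\tr{\ti{g}}{g}$ comparable. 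Write $e:=\tr{g}{\ti{g}}$ for the energy density of the identity map $I$.

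\emph{Step 1: a subsolution inequality for $e$.} Starting from \eqref{key2} and using \eqref{equiv} to replace $\tr{\ti{g}}{g}$ by $e$, I would derive a Bochner-type inequality
\[
\ti{\Delta}\, e \;\ge\; -C\,e^{2}-C\,e ,
\]
with $C$ depending only on $\Omega$, $J$ and $\sigma$. The essential point, and the reason one cannot simply quote the classical harmonic-map Bochner formula, is that \eqref{key2} is the \emph{curvature-clean} second-order estimate in which only the fixed geometry of $g$ enters: the naive Bochner identity taken with respect to $g$ would produce the curvature of the target $(M,\ti{g})$, which is not controlled a priori. The price is that the inequality above is written with the solution Laplacian $\ti{\Delta}$ rather than with $\Delta_{g}$.

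\emph{Step 2: local sup estimate.} I would then run the standard $\ve$-regularity machine for this subsolution. The natural attempt is the rescaling normalization of Schoen: set $G(\rho)=(r-\rho)^{2}\sup_{\overline{B_{g}(p,\rho)}}e$, let its maximum be attained at $\rho_{0}$ and $x_{0}$ with $e_{0}=e(x_{0})$, put $\sigma_{0}=(r-\rho_{0})/2$ so that $\sup_{B_{g}(x_{0},\sigma_{0})}e\le 4e_{0}$, and argue by contradiction assuming $e_{0}\sigma_{0}^{2}\to\infty$ along a sequence. Blowing up the fixed metric $g$ by $e_{0}$ flattens the domain, and the domain-curvature terms together with the non-harmonicity error \eqref{errorterm} (bounded in $g$) carry a factor $e_{0}^{-1}\to0$ and drop out; the monotonicity formula \eqref{monot} keeps the scale-invariant energy bounded by $C\ve$ on every sub-ball and forces it to zero after rescaling. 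A De Giorgi--Nash--Moser mean-value estimate applied to the subsolution $e$ would then bound $\sup e$ by its local energy, contradicting the normalization $e(x_{0})=e_{0}$ and yielding the asserted inequality with constants depending only on $\Omega$, $J$ and $\sigma$.

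\emph{Main obstacle.} The crux — and presumably the reason this is stated as a conjecture rather than a theorem — is the mismatch between the two metrics. The monotonicity formula and the energy in the hypothesis are written with the fixed metric $g$, whereas the only clean second-order inequality available, \eqref{key2}, is tied to the solution metric $\ti{g}$, whose geometry is not controlled. Under the blow-up normalization the target metric $\ti{g}$ becomes highly anisotropic relative to the now unit-scale domain metric: the Calabi--Yau equation pins the determinant $\det_{g}\ti{g}=e^{F}$, so one eigenvalue of $\ti{g}$ may degenerate while the other blows up, and consequently neither a uniform ellipticity bound for $\ti{\Delta}$ nor a uniform Sobolev inequality for the Moser iteration is available in terms of $g$ alone. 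Compounding this, the smallness hypothesis is only of $L^{1}$ type whereas a direct iteration of the inequality in Step~1 would want $L^{n/2}=L^{2}$ smallness to absorb the quadratic term $-Ce^{2}$. Bridging this gap — plausibly by using $dV_{\ti{g}}\approx dV_{g}$ and the dimension-four trace equivalence \eqref{equiv} to extract an a priori higher integrability of $e$ from the monotonicity before iterating — is exactly where I expect the real difficulty to lie.
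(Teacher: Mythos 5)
There is a genuine gap, and it is worth being precise about its status: the statement you are proving is presented in the paper as Conjecture \ref{epsilo} --- the authors give \emph{no} proof of it, and immediately after the statement they explain exactly why the harmonic-map strategy you outline cannot be made to work. Your Step 1 is sound but not new: the inequality $\ti{\Delta}\,\tr{g}{\ti{g}} \ge -C_2 - C_3(\tr{g}{\ti{g}})^2$ is quoted in the paper from \cite{We2} and (3.19) of \cite{TWY}, and the monotonicity input \eqref{monot}, \eqref{decay} is likewise already in hand. The failure is concentrated in Step 2. Both devices you invoke there --- the De Giorgi--Nash--Moser mean-value estimate for a subsolution, and the Schoen-type blow-up with the normalization $G(\rho)=(r-\rho)^2\sup e$ --- require analytic control of the operator for which $e$ is a subsolution. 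Since the only available inequality is with respect to $\ti{\Delta}$, the Moser iteration needs a uniform Sobolev (or ellipticity) constant for $\ti{g}$, and this is precisely what is not known: as the paper says, ``since the Sobolev constant of $\ti{g}$ is not bounded a priori, the strategy of proof in \cite{Sc} breaks down.'' Replacing $\ti{\Delta}$ by $\Delta_g$ is not an option either, because the Bochner term would then involve the full curvature of $\ti{g}$, which is uncontrolled --- the other half of the paper's stated obstruction.

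The blow-up in your Step 2 does not repair this. Rescaling the domain metric $g$ by $e_0$ flattens $g$ and kills the error term \eqref{errorterm}, but it gives no control whatsoever on $\ti{g}$: in dimension four the Calabi--Yau equation \eqref{CY1} pins only $\det_g \ti{g}=e^F$, so the eigenvalues of $\ti{g}$ relative to $g$ can behave like $(\lambda, e^F/\lambda)$ with $\lambda\to\infty$ at the blow-up point, and $\ti{\Delta}$ degenerates relative to the rescaled domain metric exactly where you need it to be uniformly elliptic. Consequently neither the mean-value estimate at the blow-up scale nor compactness of blow-up limits is available, and the contradiction you aim for cannot be extracted. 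There is also the secondary scaling mismatch you note yourself: the hypothesis provides $L^1$-smallness of $e$, whereas absorbing the quadratic term $-Ce^2$ in four dimensions wants $L^{2}$-smallness, and no a priori higher integrability of $e$ is established. To your credit, your closing ``Main obstacle'' paragraph identifies this metric mismatch honestly --- but that paragraph \emph{is} the open problem: what you have written is a program whose unproved bridging step coincides with the entire content of the conjecture, which remains open in the paper as well.
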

Such a result holds for harmonic maps \cite{Sc} so one may wonder why it cannot just be applied directly in this case. The point is that a crucial step in the proof of the $\ve$-regularity in \cite{Sc} is the differential inequality
$$\Delta \tr{g}{\ti{g}}\geq -C_0 \tr{g}{\ti{g}} -C_1 (\tr{g}{\ti{g}})^2,$$
where $\Delta$ is the Laplacian of $g$, the constant $C_0$ depends on the Ricci curvature of $g$ while $C_1$ depends on the whole Riemann curvature tensor of $\ti{g}$. In the setting of the Calabi-Yau equation this is not controlled, and we are forced to use the Laplacian of $\ti{g}$ instead. The computation
$$\ti{\Delta} \tr{g}{\ti{g}}\geq -C_2 -C_3 (\tr{g}{\ti{g}})^2,$$
appears in \cite{We2}, or (3.19) of \cite{TWY}, where now $C_2$ and $C_3$ only depend on the fixed data. But since the Sobolev constant of $\ti{g}$ is not bounded a priori, the strategy of proof in \cite{Sc} breaks down.

If Conjecture \ref{epsilo} were proved, then together with \eqref{decay} it would strongly suggest that the blow-up set of a family of Calabi-Yau equations 
has real codimension at least $2$. It is tempting to speculate that this set should actually be represented by a $J$-holomorphic curve, see \cite{D}, and that this might ultimately lead to a proof of Conjecture \ref{conjecture1}. Results roughly along these lines have been proved by Taubes \cite{Ta} for solutions of the Seiberg-Witten equations, which exhibit less nonlinearity than the Calabi-Yau equation.

\bigskip
\bigskip
\noindent
{\bf Acknowledgements}
The authors would like to thank Professor Yau for many useful conversations, and for his advice and support.

\bigskip
\bigskip

\end{document}